\documentclass{amsart}
\usepackage{amsfonts,amssymb,amsmath,amsthm}
\usepackage{url}
\usepackage{enumerate}

\urlstyle{sf}
\newtheorem{thrm}{Theorem}[section]
\newtheorem{lem}[thrm]{Lemma}
\newtheorem{prop}[thrm]{Proposition}
\newtheorem{cor}[thrm]{Corollary}
\theoremstyle{definition}
\newtheorem{definition}[thrm]{Definition}
\newtheorem{remark}[thrm]{Remark}
\numberwithin{equation}{section}

\author{Anatolij~Plichko}

\address{
Department of Mathematics\\
Cracow University of Technology\\
Cracow, Poland} \email{aplichko@pk.edu.pl}

\keywords{$\mu$-continuous set, convex body, ideal,
Glivenko-Cantelli theorem}
\subjclass{Primary 46B20, Secondary 46G12}

\begin{document}
\title[On uniform continuity of convex bodies]
{On uniform continuity of convex bodies with respect to
measures in Banach spaces}
\begin{abstract}
Let $\mu$ be a probability measure on a separable Banach space
$X$. A subset $U\subset X$ is $\mu$-continuous if $\mu(\partial
U)=0$. In the paper the $\mu$-continuity and uniform
$\mu$-continuity of convex bodies in $X$, especially of balls and
half-spaces, is considered. The $\mu$-continuity is interesting
for study of the Glivenko-Cantelli theorem in Banach spaces.
Answer to a question of F.~Tops{\o}e is given.
\end{abstract}
\maketitle

\section{Introduction}

We consider real {\it separable} Banach spaces $X$ only, and do
not mention this explicitly. By $B_r(z)$ we denote the closed ball
of $X$ with radius $r$ and center $z$, by $\,S_X$ the unit sphere
of $X$, by $\,X^*$ the dual of $X$. Given subset $U\subset X$ and
$\delta>0\,$, $\,\partial_\delta U\;$ stands for the
$\delta$-neighborhood of its boundary $\partial U$. All measures
are assumed to be {\it probability} measures and are defined on
Borel subsets of $X$. Let measure $\mu$ be defined on a Banach
space $X$. Our purpose is to analyze the following concepts.

\begin{definition}
A subset $U$ of a Banach space $X$ is called $\mu$-{\it
continuous} if $\mu(\partial U)=0$. A class $\mathcal{U}$ of
subsets in $X$ is called $\mu$-{\it continuous} if each set
$U\in\mathcal{U}$ is $\mu$-continuous \cite[p.~149]{Topsoe3}.

We call a class $\mathcal{U}$ {\it uniformly $\mu$-continuous} if
for every $\varepsilon>0$ there is a $\delta>0$ such that
$\mu(\partial_\delta U)<\varepsilon$ for all $U\in\mathcal{U}$. We
call a class $\mathcal{U}$ {\it uniformly discontinuous} if it is
not $\mu$-uniformly continuous with respect to any measure $\mu$.

A Banach space $X$ is said to be $\mathcal{U}$-{\it ideal}$\,$
if $\mu$-continuity of $\mathcal{U}$ implies its uniform
$\mu$-continuity, for any measure $\mu$ \cite[p.~283]{Topsoe1}.
\end{definition}

These notions have their origin in the theory of empirical
distributions and are connected with generalizations of the
Glivenko-Cantelli theorem to metric spaces \cite{Sazonov},
\cite{BillingsleyTopsoe}, \cite{Topsoe1}, \cite{Topsoe2},
\cite{Topsoe3}, \cite{TopsoeDudleyHoffmannJorgensen},
\cite{Szabados}, \cite{MatsakPlichko}. Sometimes (see e.g.
\cite[p.~279]{Topsoe1}) one talks about the $U$-continuity of a
measure $\mu$ instead of the $\mu$-continuity of sets $U$. Uniform
$\mu$-continuity was repeatedly used without a name (see e.g.
\cite{Sazonov}, \cite[p.~2]{BillingsleyTopsoe},
\cite[p.~282]{Topsoe1}, \cite[p.~151]{Topsoe3}), and is equivalent
to so-called $\mu$-uniformity, which we do not consider. Of
course, every uniformly $\mu$-continuous class is
$\mu$-continuous. Examples showing that the opposite statement is
false are well known. We present some such examples below.

Of course, these concepts can be considered (and were considered)
for any metric space. We restrict ourselves to Banach
spaces and concentrate on the class of {\it convex bodies}
$\mathcal{C}$ or one of the following subclasses of $\mathcal{C}$:

{\it Half-spaces} $\mathcal{H}_F$, i.e. sets of the form
$H_{x^*t}=\{x\in X:x^*(x)\le t\}\,$, $\;x^*\in F$, $x^*\ne 0\,$,
$\;t\in\mathbb{R}$, where $F$ is a subset of $X^*$. We denote
$\mathcal{H}_{X^*}=\mathcal{H}$, for short. Obviously,
$\partial_\delta H_{x^*t}=\{x\in X:|x^*(x)-t|<\delta\}$.

{\it Balls} $\mathcal{B}$, and {\it balls with radii $\le 1\,$}
(small balls) $\mathcal{B}_1$. Obviously, $\;\partial_{\delta}
B_r(z)= \{x\in X:r-\delta<\|x-z\|<r+\delta\}$.

Study of balls with radii $\le r$ can be reduced to
$\mathcal{B}_1$ by introducing of a proportional norm. The
$\mathcal{B}_1$-ideality is interesting also for investigation
of the Vitali theorem in Banach spaces \cite[p.~148]{Topsoe3}.
Study of the $\mu$-continuity for balls leads to questions on
geometry of unit sphere which are interesting by itself.
Necessary definitions and results in Banach space theory can
be found in \cite{BenyaminiLindenstrauss}, \cite{DevilleGZ},
\cite{HajekMVZ}. However we recall, for the convenience of
the reader, relevant definitions.
\medskip

We know only papers \cite{Sazonov}, \cite{Topsoe3},
\cite{TopsoeDudleyHoffmannJorgensen}, \cite{Aniszczyk} concerning
$\mu$-continuity directly. Let us recall some results of the
mentioned papers (mainly of \cite{Topsoe3}), simultaneously
presenting relevant statements of our note. \medskip

The situation is rather clear for the class $\mathcal{H}$. Every
finite-dimensional normed space $X$ is $\mathcal{H}$-ideal (see
e.g. \cite[Theorem~2]{Topsoe2}). No infinite-dimensional
Banach space is $\mathcal{H}$-ideal (it follows from \cite{Sazonov}).
We obtain this result from a simple and mostly known

\begin{prop}
\label{halfspace}
Let $X$ be an infinite-dimensional Banach space and $F\subset X^*$
be a total linear subspace. Then the class $\mathcal{H}_F$ is
uniformly discontinuous.
\end{prop}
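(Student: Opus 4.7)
The plan is to prove, for an arbitrary probability measure $\mu$ on $X$, the existence of $\varepsilon > 0$ such that for every $\delta > 0$ one can exhibit $H \in \mathcal{H}_F$ with $\mu(\partial_\delta H) \ge \varepsilon$; this negates $\mu$-uniform continuity of $\mathcal{H}_F$, and since $\mu$ is arbitrary it yields uniform discontinuity. The guiding picture is that, by tightness, $\mu$ sits essentially on a compact set $K$, and a compact set in an infinite-dimensional Banach space can be squeezed into an arbitrarily thin strip $\{x : |x^*(x)| < \delta\}$ for a suitably chosen norm-one functional $x^*$; totality of $F$ will let us choose such an $x^*$ inside $F$ itself.

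Concretely, I would fix $\varepsilon = 1/2$ and, using tightness of Borel probability measures on a separable Banach space, choose once and for all a compact $K \subset X$ with $\mu(K) > 1/2$. Given $\delta > 0$, compactness of $K$ yields finitely many points $k_1,\dots,k_n \in K$ with $K \subset \bigcup_i B_{\delta/2}(k_i)$; set $Y = \operatorname{span}\{k_1,\dots,k_n\}$, so that every $k \in K$ decomposes as $k = y + b$ with $y \in Y$ and $\|b\| \le \delta/2$. If I can produce $x^* \in F$ with $\|x^*\| = 1$ and $x^*|_Y = 0$, then $|x^*(k)| = |x^*(b)| \le \delta/2 < \delta$ for every $k \in K$, so $K \subset \partial_\delta H_{x^*,0}$ and hence $\mu(\partial_\delta H_{x^*,0}) \ge \mu(K) > \varepsilon$, as required.

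The crux --- and the only step I expect to be delicate --- is the production of this $x^*$, and here both hypotheses enter. Let $Y^\perp = \{x^* \in X^* : x^*|_Y = 0\}$. Totality of $F$ forces the restriction map $F \to Y^*$ to have image with trivial pre-annihilator in the finite-dimensional $Y$; hence the image exhausts $Y^*$, so $F \cap Y^\perp$ has finite codimension $\dim Y$ in $F$. On the other hand, $F$ itself must be infinite-dimensional, since $F|_Y = Y^*$ for every finite-dimensional $Y \subset X$ and $\dim Y$ can be taken arbitrarily large. Consequently $F \cap Y^\perp \neq \{0\}$, and normalizing any non-zero element produces the desired functional.
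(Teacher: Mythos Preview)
Your proof is correct and follows essentially the same strategy as the paper: both show that $\mu$ is concentrated near a finite-dimensional subspace (you via Ulam tightness plus an explicit cover, the paper via its Lemma~\ref{lemmaL}) and then produce a nonzero $x^*\in F$ vanishing on that subspace (you by a direct dimension count showing $F\cap Y^\perp\neq\{0\}$, the paper by Hahn--Banach separation in the locally convex topology $w(X,F)$, which is Hausdorff precisely because $F$ is total). The two arguments are equivalent in spirit and content.
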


A relation between the $\mu$-continuity of the classes of balls
and half-spaces in $c_0$ was considered in \cite{Topsoe3}. Namely,
let $\mathcal{H}_N$ be the class of half-spaces of the form
$H_{nt}=\{(a_1,a_2,\dots)\in c_0:a_n\le t\}\,$, $n\in\mathbb{N}$,
$\;t\in \mathbb{R}$. In the space $c_0$, the class $\mathcal{B}$
is $\mu$-continuous if and only if $\mathcal{H}_N$ is
$\mu$-continuous. We give an abstract version of this result and
show that every Banach space $X$ admits a measure $\mu$ for which
$\mathcal{B}$ is $\mu$-continuous, but $\mathcal{H}$ is not.
\smallskip

Only a few results about $\mathcal{B}$-ideal spaces were known.
Tops{\o}e \cite[p.~149]{Topsoe3} writes ``(practically) never
Banach space is $\mathcal{B}$-ideal; just consider a measure
concentrated on a line in $\mathbb{R}^2$''. This idea is realized
in the following statement.

\begin{cor}\label{smoothConvex}
No smooth and no rotund $($infinite- or finite-dimensional$)$
Banach space is $\mathcal{B}$-ideal.
\end{cor}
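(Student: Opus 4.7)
My plan is to implement Tops{\o}e's ``line in $\mathbb{R}^2$'' suggestion inside an arbitrary smooth or rotund Banach space $X$: pass to a 2-dimensional subspace $Y\subset X$ (which inherits the smoothness or rotundity from $X$), concentrate a non-atomic probability measure $\mu$ on the tangent line $L$ to $S_Y$ at a judiciously chosen point $y_0$, and then show that $\mathcal{B}$ is $\mu$-continuous while a sequence of large balls tangent to $L$ at $y_0$ refutes uniform $\mu$-continuity.

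First I would pick a 2-dimensional subspace $Y\subset X$ and, by Mazur's theorem applied in the separable space $Y$, a smooth point $y_0\in S_Y$ with unique supporting functional $y_0^*\in S_{Y^*}$; let $e\in S_Y$ satisfy $y_0^*(e)=0$. In the smooth (non-rotund) case I would additionally require $y_0$ to lie on a strictly convex arc of $\partial B_1^Y$ (which exists, since a smooth closed convex curve cannot be entirely piecewise-linear), so that $y_0$ is exposed in $Y$, and require $e$ not to lie in the direction subspace of any flat face of $\partial B_1^X$. Then I set $L=\{y_0+te : t\in\mathbb{R}\}$ and let $\mu$ be normalized Lebesgue measure on $\{y_0+te : |t|\le 1\}$.

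For $\mu$-continuity, the function $f(t)=\|y_0+te-z\|_X$ is convex, so $L\cap\partial B_R^X(z)=f^{-1}(R)$ is either finite or an interval; the interval case occurs only if $\partial B_R^X(z)$ contains a segment in direction $e$, which is ruled out by rotundity of $X$ (no flat faces at all) or by the construction of $e$ in the smooth case. Hence $\mu(\partial B)=0$ for every $B\in\mathcal{B}$. For failure of uniformity, consider the balls $B_R=B_R^X\bigl((1-R)y_0\bigr)$. Since $(y_0-(1-R)y_0)/R=y_0$ has supporting functional (unique on $Y^*$) equal to $y_0^*$ with $y_0^*(e)=0$, the hyperplane through $y_0$ containing $L$ supports $B_R$ at $y_0$. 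Writing $h(u)=\|y_0+ue\|_Y$, smoothness of $Y$ at $y_0$ gives $h(0)=1$ and $h'(0)=0$, so $h(u)-1=o(u)$, and the function $\lambda(\eta):=|\{u : h(u)\le 1+\eta\}|$ satisfies $\lambda(\eta)/\eta\to\infty$ as $\eta\to 0^+$. For any $\delta>0$, pick $\eta$ with $\lambda(\eta)\ge\eta/\delta$ and set $R=\delta/\eta$: from $\|y_0+te-(1-R)y_0\|_X=R\,h(t/R)$ one obtains $\partial_\delta B_R\cap L\supseteq\{y_0+te : |t|\le R\lambda(\eta)\}\supseteq\mathrm{supp}(\mu)$, so $\mu(\partial_\delta B_R)=1$, contradicting uniform $\mu$-continuity.

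The main obstacle I anticipate is in the $\mu$-continuity step for the smooth (non-rotund) case: namely, choosing $e\in Y$ that avoids every flat-face direction of $\partial B_1^X$. This is automatic in the rotund case (there are no flat faces) and easy inside a 2D smooth space (flat faces there are at most countable), but for infinite-dimensional smooth $X$ one must justify that such a direction $e$ still exists---for instance via a Baire-category argument on $S_X$ or a careful selection of the 2D subspace $Y$. This is the technical heart that separates the smooth case from the rotund one.
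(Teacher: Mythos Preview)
Your argument for the rotund case is correct and is essentially a one-dimensional simplification of the paper's Proposition~\ref{extreme-smooth}: instead of a non-degenerate Gaussian on the full tangent hyperplane, you put Lebesgue measure on a tangent line, and rotundity (no segments on any sphere) replaces the directional-porosity argument of Proposition~\ref{body}. This works cleanly.

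The smooth case, however, has a genuine gap, which you yourself flag. The requirement on $e$ is that \emph{no sphere of $X$} contain a segment in direction $e$; this is a condition on $X$, not on the 2-dimensional subspace $Y$, and your remark that ``flat faces there are at most countable'' applies only to $\dim X=2$. For infinite-dimensional smooth $X$ there is no evident reason why the set of segment directions of $S_X$ should be meager (each flat face contributes an entire sphere of directions in its affine span, and there is no dimension count available), so the Baire-category route you propose is not clearly viable. Nothing in the hypotheses prevents the segment directions from being dense---or even all of $S_X$---and you offer no mechanism to rule this out.

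The paper avoids this difficulty by treating the smooth case in two entirely different ways. For finite-dimensional smooth $X$ it invokes the existence of an \emph{exposed} point of $B_X$ (a classical fact in finite dimensions) and applies Proposition~\ref{extreme-smooth} with a Gaussian on the full tangent hyperplane; exposedness guarantees that $D\cap B$ is either trivial or a genuine convex body in $D$, so Proposition~\ref{body} gives $\mu$-continuity without any choice of direction. For infinite-dimensional smooth $X$ the paper abandons the ``measure on a tangent set'' idea altogether: it takes a non-degenerate Gaussian on all of $X$ (so $\mu$-continuity of $\mathcal{B}$ is immediate from Proposition~\ref{body}) and obtains the failure of uniform $\mu$-continuity from Corollary~\ref{smooth}, which in turn rests on Propositions~\ref{tangent} and~\ref{halfspace}. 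Your line-based construction does not seem to reach the infinite-dimensional smooth case without substantial new input.
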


Further, $\mathcal{B}$ is uniformly discontinuous in the space
$\ell_p$ of $p$-summable sequences, $1\le p<\infty$
\cite[p.~155]{Topsoe3}. We generalize a part of this fact for
$p>1$.

\begin{cor}\label{smooth}
The class $\mathcal{B}$ is uniformly discontinuous in every
infinite-dimensional Banach space $X$ having a smooth norm.
\end{cor}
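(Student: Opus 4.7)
The plan is to reduce Corollary \ref{smooth} to Proposition \ref{halfspace} by exploiting the fact that, in a smooth norm, a sufficiently large sphere locally approximates any tangent half-space. Fix a probability measure $\mu$; I will show that $\mathcal{B}$ is not uniformly $\mu$-continuous. Applying Proposition \ref{halfspace} with $F = X^*$ produces an $\varepsilon > 0$ such that for every $\delta' > 0$ some half-space $H_{x^*t}$ (with $\|x^*\|=1$) satisfies $\mu(\partial_{\delta'} H_{x^*t}) \geq \varepsilon$.

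The first step is to replace the normal $x^*$ by the support functional $f_y$ at some $y \in S_X$. By the Bishop--Phelps theorem, norm-attaining functionals are norm-dense in $S_{X^*}$; and smoothness forces every such functional of unit norm to have the form $f_y$ for a unique $y \in S_X$. By tightness of $\mu$ on the Polish space $X$, fix a compact set $K \subset B_M(0)$ with $\mu(X\setminus K) < \varepsilon/4$. Given $\delta > 0$, I would choose the bad half-space at level $\delta' = \delta/4$ and approximate $x^*$ by such an $f_y$ within $\delta/(4M)$; on $B_M(0)$ this shifts the defining slab by less than $\delta/4$, and after discarding the mass outside $K$ one obtains a half-space with $\mu(\partial_{\delta/2} H_{f_y, t}) \geq 3\varepsilon/4$.

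The main step is to realize the hyperplane $\{f_y = t\}$ as the tangent plane at $ty$ of a very large ball $B_r((t-r)y)$. Writing
\[
\|v - (t-r)y\| - r = r\bigl(\|y + (v-ty)/r\| - 1\bigr),
\]
convexity of the norm makes this quantity nonincreasing in $r > 0$, and Gateaux differentiability at $y$ gives pointwise convergence to $f_y(v - ty)$ as $r \to \infty$. The main obstacle is that Gateaux smoothness is not uniform on bounded sets in infinite dimensions; I would overcome this by invoking Dini's theorem on the compact set $K$, which turns the monotone pointwise convergence into uniform convergence there. Choosing $r$ so large that the approximation error on $K$ is less than $\delta/4$, every $v \in K \cap \partial_{\delta/2} H_{f_y, t}$ then lies in $\partial_\delta B_r((t-r)y)$, whence $\mu(\partial_\delta B_r((t-r)y)) \geq \varepsilon/2$. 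Since $\delta > 0$ was arbitrary and $\mu$ was arbitrary, $\mathcal{B}$ is uniformly discontinuous in $X$.
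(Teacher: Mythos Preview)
Your argument is correct and follows the same underlying idea as the paper: large spheres in a smooth norm approximate their tangent hyperplanes, so uniform $\mu$-continuity of $\mathcal{B}$ would force uniform $\mu$-continuity of half-spaces, contradicting Proposition~\ref{halfspace}. The paper simply invokes Proposition~\ref{tangent} for this reduction, whereas you unfold it inline.

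Two differences are worth recording. First, the paper's one-line deduction writes $\mathcal{H}$ where Proposition~\ref{tangent} literally yields $\mathcal{H}_S$; in a smooth but non-reflexive space these coincide only after a Bishop--Phelps approximation, which you make explicit. Second, to upgrade the Gateaux (pointwise) approximation of the hyperplane by large spheres to something uniform, the paper's proof of Proposition~\ref{tangent} captures most of the mass by finitely many small balls with centres on $\partial H$ and moves each centre onto a large sphere; you instead fix one compact set carrying most of $\mu$, observe that $r\mapsto \|v-(t-r)y\|-r$ is monotone by convexity, and apply Dini's theorem. Both devices achieve the same end; yours has the virtue of isolating the compactness once and for all and making the monotone convergence transparent.
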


The only known $\mathcal{B}$-ideal space is the space $C(S)$ of
continuous functions over a compact metric set $S$. More precisely,
using some reasoning of Tops{\o}e \cite[p.~285--286]{Topsoe1},
Aniszczyk proved \cite{Aniszczyk} that $C(S)$ is
$\mathcal{B}_1$-ideal and claimed that his proof implies the
$\mathcal{B}$-ideality of $C(S)$.
\medskip

As for $\mathcal{B}_1$-ideality, every finite-dimensional normed
space is $\mathcal{B}_1$-ideal \cite[p.~153]{Topsoe3}. The spaces
$\ell_p,\;1\le p<\infty$, are $\mathcal{B}_1$-ideal, but $c_0$ is
not \cite[p.~154]{Topsoe3}. Moreover, the class $\mathcal{B}_1$ is
uniformly discontinuous for $c_0$ \cite[p.~151]{Topsoe3}.
Similarly, the space $L_1$ of absolutely integrable on $[0,1]$
functions is not $\mathcal{B}_1$-ideal
\cite[p.~144]{TopsoeDudleyHoffmannJorgensen}. The authors of
\cite{TopsoeDudleyHoffmannJorgensen} conjectured that
$\mathcal{B}_1$ is uniformly discontinuous in $L_1$.

We show that every infinite-dimensional Banach space can be
(equivalently) renormed in such a way that $\mathcal{B}_1$
becomes uniformly discontinuous in the new norm. However, it is not
known, whether every Banach space $X$ can be renormed so that $X$
becomes $\mathcal{B}_1$-ideal in the new norm.

Tops{\o}e \cite[p.~157]{Topsoe3} asked whether every
subspace of $\ell_p$, $1\le p<\infty$, is $\mathcal{B}_1$-ideal?
The almost positive answer follows from the next statement (see
Corollary \ref{ell-p} below for details).

\begin{cor}\label{weak-p}
Every dual Banach space with property $(\mathbf{m}^*)$ is
$\mathcal{B}_1$-ideal.
\end{cor}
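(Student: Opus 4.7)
The plan is to argue by contradiction: I would extract a weak*-convergent subsequence of ball centers and use property $(\mathbf{m}^*)$ to confine the offending $\mu$-mass to a single sphere around the weak* limit, producing a ball in $\mathcal{B}_1$ whose boundary carries positive measure. Concretely, let $\mu$ be a probability measure on $X=Y^{*}$ for which $\mathcal{B}_1$ is $\mu$-continuous but not uniformly so. Then there exist $\varepsilon>0$, radii $r_n\in[0,1]$, centers $z_n\in X$ and $\delta_n\downarrow 0$ with
\[
\mu(E_n)\ge\varepsilon,\qquad E_n=\bigl\{x\in X:\bigl|\,\|x-z_n\|-r_n\bigr|<\delta_n\bigr\}.
\]
Tightness of $\mu$ furnishes a compact $K$ with $\mu(K)>1-\varepsilon/2$, and any point of $K\cap E_n$ forces $(z_n)$ to be bounded.

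Since $X$ is a separable dual space, its predual is separable and so bounded sets of $X$ are weak*-metrizable and weak*-sequentially compact. I would then pass to a subsequence with $r_n\to r\in[0,1]$, $z_n\to z$ weak* (with $z\in X$), and $\|z_n-z\|\to c\in[0,\infty)$. Applying $(\mathbf{m}^*)$ to the weak*-null sequence $z_n-z$ yields a continuous function $\phi$ with
\[
\lim_n\|x-z_n\|=\phi\bigl(\|x-z\|,c\bigr)\qquad\text{for every }x\in X,
\]
and --- crucially --- $\phi(\cdot,c)$ is strictly increasing, so the equation $\phi(a,c)=r$ admits at most one solution $a=r'\in[0,r]$. (Under $(\mathbf{m}^*_p)$ one would have $\phi(a,c)=(a^p+c^p)^{1/p}$.)

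The finish is a reverse Fatou argument. The set $A=\limsup_n E_n$ satisfies $\mu(A)\ge\limsup_n\mu(E_n)\ge\varepsilon$, and for each $x\in A$ there is a subsequence along which $\|x-z_{n_k}\|\to r$, forcing $\phi(\|x-z\|,c)=r$ and hence $\|x-z\|=r'$. Therefore $A\subset\partial B_{r'}(z)$ with $r'\le r\le 1$, and $\mu(\partial B_{r'}(z))\ge\varepsilon$ contradicts the $\mu$-continuity of $B_{r'}(z)\in\mathcal{B}_1$. The borderline case $r'=0$ would put positive $\mu$-mass at the single point $z$, which lies on the boundary of $B_{1/2}(z+\tfrac{1}{2}u)\in\mathcal{B}_1$ for any unit vector $u$, yielding the same contradiction.

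The main obstacle I anticipate is the appeal to the strict monotonicity of $\phi(\cdot,c)$ supplied by $(\mathbf{m}^*)$: this is the precise property that collapses the limit set of the thin shells to a \emph{single} sphere rather than a ball, and it is what separates the spaces covered by the corollary from failing examples such as $c_0$. Everything else --- the boundedness via tightness, the weak* extraction, the Fatou step, and the elementary treatment of the degenerate center-point case --- is routine once this ingredient is secured.
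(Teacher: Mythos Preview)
Your argument is correct and follows essentially the same route as the paper: extract a subsequence with $r_n\to r$, $z_n\to z$ weak*, $\|z_n-z\|\to c$, and then use the strictly increasing $\varphi$ from property $(\mathbf{m}^*)$ to force every point of the limiting set to lie on the single sphere $\{\|x-z\|=\varphi^{-1}(r)\}$ of radius at most $1$. The only packaging difference is that the paper quotes Tops{\o}e's lemma (Lemma~\ref{TopsoeLemma}) to obtain an \emph{intersection} $\bigcap_n\partial_{\delta_n}B_{r_n}(z_n)$ of positive measure and then proves (Lemma~\ref{lweak-p}) that this Tops{\o}e set sits in a sphere, whereas you bypass Tops{\o}e's lemma and work with $\limsup_n E_n$ via reverse Fatou; since $(\mathbf{m}^*)$ already makes the full sequence $\|x-z_n\|$ converge, your limsup argument goes through without extra work. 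Your explicit treatment of the degenerate case $r'=0$ is a small bonus the paper leaves implicit.
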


See Section 2 for the definition of property $(\mathbf{m}^*)$.
Here we note only that it is similar to Kalton's property
$(\mathbf{M})$ from \cite{Kalton}. \medskip

Proofs of this paper are simple and geometrically clear. We try to
single out geometrical properties of Banach spaces ``responsible''
for various properties of $\mu$-continuity.


\section{Main results}

First we make some general remarks. Intersections of the form
$\bigcap_n\partial_{\delta_n}U_n$, where $U_n\in \mathcal{U}$ and
$\delta_n\downarrow 0$, play a decisive role for the establishing
of uniform $\mu$-continuity. We call these intersections
{\it Tops{\o}e $\mathcal{U}$-sets}.

\begin{lem}\label{TopsoeLemma}
{\rm \cite[p.~151]{Topsoe1}}. A class $\mathcal{U}$ is uniformly
$\mu$-continuous if and only if $\mu(A)=0$ for each Tops{\o}e
$\mathcal{U}$-set $A$.
\end{lem}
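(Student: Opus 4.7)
The plan is to prove the two implications separately; the forward direction is an immediate containment argument, whereas the converse is where the substantive work lies and requires a measure-theoretic construction.

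For $(\Rightarrow)$, I would assume $\mathcal{U}$ is uniformly $\mu$-continuous and let $A=\bigcap_n\partial_{\delta_n}U_n$ be an arbitrary Topsøe $\mathcal{U}$-set, with $\delta_n\downarrow 0$. Given $\varepsilon>0$, use uniform $\mu$-continuity to pick $\delta>0$ such that $\mu(\partial_\delta V)<\varepsilon$ for every $V\in\mathcal{U}$, then choose $n$ so large that $\delta_n<\delta$. The chain of inclusions $A\subseteq\partial_{\delta_n}U_n\subseteq\partial_\delta U_n$ gives $\mu(A)<\varepsilon$, and letting $\varepsilon\downarrow 0$ forces $\mu(A)=0$.

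For $(\Leftarrow)$, I would argue contrapositively: starting from failure of uniform $\mu$-continuity, produce a Topsøe $\mathcal{U}$-set of positive measure. Negating the definition yields $\varepsilon_0>0$ such that for every $n$ some $U_n\in\mathcal{U}$ satisfies $\mu(\partial_{1/n}U_n)\ge\varepsilon_0$. Because $\mu$ is a probability measure, the indicators $\mathbf{1}_{\partial_{1/n}U_n}$ are dominated by $1$, so the reverse Fatou lemma gives
\[
 \mu\bigl(\limsup\nolimits_n \partial_{1/n}U_n\bigr)\;\ge\;\limsup_n \mu(\partial_{1/n}U_n)\;\ge\;\varepsilon_0.
\]
The main obstacle is that $\limsup_n \partial_{1/n}U_n = \bigcap_N\bigcup_{n\ge N}\partial_{1/n}U_n$ is not syntactically of the form required by a Topsøe set. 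To convert it into one, I would refine the choice of parameters via a diagonal extraction: replace the thresholds $1/n$ by strictly larger values $\eta_n\downarrow 0$ and choose $V_n\in\mathcal{U}$ pushing $\mu(\partial_{\eta_n}V_n)$ as close as possible to $\sup_{V\in\mathcal{U}}\mu(\partial_{\eta_n}V)$, so that the complement sizes form a summable schedule $\sum_n(1-\mu(\partial_{\eta_n}V_n))<1$. A union bound on complements then yields
\[
 \mu\Bigl(\bigcap_n\partial_{\eta_n}V_n\Bigr)\;\ge\;1-\sum_n\bigl(1-\mu(\partial_{\eta_n}V_n)\bigr)\;>\;0,
\]
producing the required Topsøe set. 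The delicate point is guaranteeing summability of complements simultaneously with $\eta_n\downarrow 0$; this uses both the monotonicity $\mu(\partial_\delta V)\uparrow 1$ as $\delta\to\infty$ and the fact that failure of uniform continuity supplies large-boundary witnesses at every scale.
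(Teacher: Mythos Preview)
The paper does not give its own proof of this lemma; it is simply quoted from Tops\o e's paper, so your attempt has to stand on its own merits rather than be compared with anything in the present article.

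Your $(\Rightarrow)$ is correct and is the easy direction.

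Your $(\Leftarrow)$ has a real gap. After correctly noting that $\limsup_n \partial_{1/n}U_n$ is not itself a Tops\o e set, you try to repair this by choosing $\eta_n\downarrow 0$ and $V_n\in\mathcal U$ with $\sum_n\bigl(1-\mu(\partial_{\eta_n}V_n)\bigr)<1$ and then applying a union bound on complements. But summability requires $\mu(\partial_{\eta_n}V_n)$ to be close to $1$, whereas failure of uniform $\mu$-continuity guarantees only $\sup_{V\in\mathcal U}\mu(\partial_{\eta_n}V)\ge\varepsilon_0$. The monotonicity $\mu(\partial_\delta V)\uparrow 1$ that you invoke holds for \emph{large} $\delta$ with $V$ fixed; it says nothing once you force $\eta_n\to 0$. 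Nothing excludes, for instance, $\sup_{V}\mu(\partial_\eta V)=\varepsilon_0$ for all small $\eta$, in which case $1-\mu(\partial_{\eta_n}V_n)\ge 1-\varepsilon_0$ for every admissible choice of $V_n$ and no selection makes the series converge. The ``delicate point'' you flag at the end is in fact the entire difficulty of this direction, and your sketch does not resolve it; you would need a genuinely different mechanism (as in Tops\o e's original argument) rather than a Borel--Cantelli scheme.
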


We will use well known

\begin{lem}\label{lemmaL}
Let $\mu$ be a measure in a Banach space $X$ and let
$\varepsilon,\delta>0$. Then there exists a finite-dimensional
subspace $E\subset X$ so that $\mu(E_\delta)>1-\varepsilon$,
where $E_\delta$ is the $\delta$-neighborhood of $E$.
\end{lem}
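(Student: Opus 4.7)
The plan is to use tightness of the Borel probability measure $\mu$ on the separable (hence Polish) space $X$, combined with a routine finite cover of a compact set.

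First I would invoke Ulam's theorem: since $X$ is a separable complete metric space and $\mu$ is a Borel probability measure, $\mu$ is tight. Hence there exists a compact set $K\subset X$ with $\mu(K)>1-\varepsilon$. (If the paper does not want to rely on completeness, one can instead use that $\mu$ is concentrated on a separable set and approximate from below by finite unions of closed balls of arbitrarily small radius to obtain a totally bounded, hence relatively compact in the completion, set of measure greater than $1-\varepsilon$; but the tightness route is cleanest.)

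Next I would cover $K$ by finitely many open $\delta$-balls. By compactness, there exist points $x_1,\dots,x_n\in X$ with $K\subset\bigcup_{i=1}^n B_\delta(x_i)$, where here I mean the open $\delta$-balls (the closed-ball convention in the paper differs, but since we only need an inclusion we can equally well work with closed balls of radius $\delta$, or simply use $B_{\delta/2}(x_i)$).

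Now set $E=\operatorname{span}\{x_1,\dots,x_n\}$. This is a finite-dimensional subspace containing every $x_i$, so every point of $K$ lies within distance $\delta$ of $E$, i.e.\ $K\subset E_\delta$. Therefore
\[
\mu(E_\delta)\ge \mu(K)>1-\varepsilon,
\]
which is the desired inequality.

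The only delicate point is the first step, the tightness of $\mu$; everything after it is a direct compactness argument. Since the paper stipulates from the outset that $X$ is separable and that all measures are Borel probability measures, tightness is available without further hypotheses, so there is no serious obstacle.
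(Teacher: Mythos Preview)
Your argument is correct. The paper does not actually supply a proof of this lemma: it is stated as ``well known'' and left without demonstration. The approach you take---tightness via Ulam's theorem, a finite $\delta$-cover of the resulting compact set, and taking the span of the centers---is the standard one, and indeed the paper explicitly invokes the Ulam theorem elsewhere (``each measure is almost concentrated on a compact set (the Ulam theorem)''), so your route is exactly what the author has in mind.
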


\begin{lem}\label{shift}
Let a class $\mathcal{U}$ be shift invariant in a Banach space
$X$, i.e. $U+x\in\mathcal{U}$ for all $U\in\mathcal{U}$ and $x\in
X$. Then $\mathcal{U}$ is $\mu$-continuous $($uniformly
$\mu$-continuous$)$ if and only if it is $\mu_x$-continuous
$($resp. uniformly $\mu_x$-continuous$)$, where
$$\mu_x(U):= \mu(U+x)\,,\;\;U\in\mathcal{U}\,,\;\;x\in X.$$
\end{lem}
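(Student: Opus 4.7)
The plan is to exploit the fact that translation by a fixed vector $x\in X$ is an isometry of $X$, hence commutes with the topological operations appearing in the definitions. Specifically, I would first record the identities
\[
\partial(U+x)=\partial U+x \qquad\text{and}\qquad \partial_\delta(U+x)=\partial_\delta U+x,
\]
valid for every $U\subset X$, every $x\in X$ and every $\delta>0$. Next, shift invariance gives that the map $U\mapsto U+x$ is a bijection of $\mathcal{U}$ onto itself: its inverse $V\mapsto V-x$ also maps $\mathcal{U}$ to $\mathcal{U}$, since shift invariance applied with the vector $-x$ yields $V-x\in\mathcal{U}$ whenever $V\in\mathcal{U}$.

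With these two observations in hand, both equivalences become pure bookkeeping. For any $U\in\mathcal{U}$, the definition of $\mu_x$ gives
\[
\mu_x(\partial U)=\mu(\partial U+x)=\mu\bigl(\partial(U+x)\bigr),
\]
and likewise
\[
\mu_x(\partial_\delta U)=\mu(\partial_\delta U+x)=\mu\bigl(\partial_\delta(U+x)\bigr).
\]
Because $U+x$ ranges over all of $\mathcal{U}$ as $U$ does, the statement \emph{$\mu_x(\partial U)=0$ for every $U\in\mathcal{U}$} is literally the same as \emph{$\mu(\partial V)=0$ for every $V\in\mathcal{U}$}, which proves the first equivalence. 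Similarly, the quantified statement \emph{for every $\varepsilon>0$ there is $\delta>0$ with $\mu_x(\partial_\delta U)<\varepsilon$ for all $U\in\mathcal{U}$} coincides with \emph{for every $\varepsilon>0$ there is $\delta>0$ with $\mu(\partial_\delta V)<\varepsilon$ for all $V\in\mathcal{U}$}, yielding the uniform version.

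There is no real obstacle here; the only point one must be careful about is the direction of translation, because the convention $\mu_x(U):=\mu(U+x)$ corresponds to the pushforward of $\mu$ under translation by $-x$ rather than by $+x$. Once this sign is tracked correctly, the proof reduces to the one-line substitution above, so I would present it in just a few lines without any additional machinery.
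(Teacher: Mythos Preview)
Your argument is correct. The paper states this lemma without proof, treating it as self-evident; your write-up supplies precisely the routine verification one would expect, using that translation by a fixed vector is an isometry (so $\partial(U+x)=\partial U+x$ and $\partial_\delta(U+x)=\partial_\delta U+x$) together with the fact that shift invariance makes $U\mapsto U+x$ a bijection of $\mathcal{U}$ onto itself.
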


Most classes and properties we consider are shift invariant. We
often use this fact, without mentioning it explicitly. Using
Lemma \ref{shift}, we also talk about ``shifting of a picture''.
\medskip


\textbf{\emph{1. Classes $\mathcal{C}$ and $\mathcal{H}_F$.}} The
class of convex sets in $\mathbb{R}^n$ is an important object of
study in the theory of empirical distributions. Since each measure is
almost concentrated on a compact set (the Ulam theorem) and a
compact set in an infinite-dimensional Banach space coincides with its
boundary, the consideration of the class $\mathcal{C}$ of convex
bodies, i.e. of all convex closed sets having interior points,
is natural. Formally, if there is no measure $\mu$, respect to
which $\mathcal{U}$ is $\mu$-continuous, the space $X$ is assumed
to be $\mathcal{U}$-ideal too. So, to avoid this uninteresting
situation, one wishes to be sure in the existence of at least one
such measure. The following (in fact, well known) proposition
guarantees the existence of the mentioned measure for convex bodies.
\medskip

Recall that a subset $D$ of a Banach space $X$ is called
{\it directionally porous} \cite[p.~166]{BenyaminiLindenstrauss}
if there exists a $0<\lambda<1$ so that for every $x\in D$ there
is a direction $e\in S_X$, and points $x_n=x+\varepsilon_ne$ with
$\varepsilon_n\to 0$, for which $D$ intersects no ball
$B_{\lambda\varepsilon_n}(x_n)$. A \emph{hyperplane} in a Banach
space $X$ is a shift of a closed one-codimensional subspace by
some vector.

\begin{prop}\label{body}
The class $\mathcal{C}$ is $\mu$-continuous for every
non-degenerated $($i.e. non-concentrated on any hyperplane$)$
Gaussian measure $\mu$.
\end{prop}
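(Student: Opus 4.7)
My plan is to reduce the statement to two classical ingredients: a geometric fact that the boundary of every convex body is directionally porous, and an analytic fact that directionally porous subsets of a separable Banach space are null with respect to every non-degenerate Gaussian measure.

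To carry out the first step, fix $C\in\mathcal{C}$ and $x\in\partial C$. Since $C^\circ\ne\emptyset$ and $x\notin C^\circ$, the Hahn--Banach theorem supplies a norm-one functional $x^*\in X^*$ with $x^*(y)\le x^*(x)$ for every $y\in C$. Because $\|x^*\|=1$, I can choose $e=e(x)\in S_X$ with $x^*(e)>1/2$. Then for every $\varepsilon>0$ the point $x_\varepsilon:=x+\varepsilon e$ satisfies $x^*(x_\varepsilon)>x^*(x)+\varepsilon/2$, and any $y\in B_{\varepsilon/4}(x_\varepsilon)$ obeys
$$x^*(y)\ \ge\ x^*(x_\varepsilon)-\tfrac{\varepsilon}{4}\ >\ x^*(x),$$
which forces $y\notin C$, hence $y\notin\partial C$. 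Taking any sequence $\varepsilon_n\downarrow 0$, this shows that $\partial C$ is directionally porous with constant $\lambda=1/4$, uniformly over all convex bodies $C$.

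For the second step I would invoke the theorem (see \cite[p.~166]{BenyaminiLindenstrauss}) that every directionally porous subset of a separable Banach space is Aronszajn null, and in particular has measure zero with respect to every non-degenerate Gaussian measure on $X$. Applying this to $\partial C$ gives $\mu(\partial C)=0$ for every non-degenerate Gaussian $\mu$, which is exactly the claim.

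The main obstacle is conceptual rather than computational: the Hahn--Banach step is elementary, so everything hinges on the deep result that directional porosity implies Gaussian nullity. If one does not wish to quote this, an alternative route in the non-separable part of the theory is ruled out, but for separable $X$ one can also argue directly by slicing along one-dimensional sections through the Cameron--Martin space of $\mu$, using that a convex body in $\mathbb{R}$ has finite boundary; however, the cleanest presentation is simply to cite the porous-sets theorem as above.
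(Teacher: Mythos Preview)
Your proof is correct and follows essentially the same route as the paper: show that $\partial C$ is directionally porous via a supporting functional from Hahn--Banach, then quote the Benyamini--Lindenstrauss result that directionally porous sets are Gaussian null. The only cosmetic difference is that the paper also invokes the Riesz lemma to push the porosity constant $\lambda$ arbitrarily close to $1$, whereas you stop at $\lambda=1/4$; since any positive $\lambda$ suffices, this changes nothing.
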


\begin{proof} Simple application of the Hahn-Banach theorem
and the Riesz lemma shows that the boundary of any convex
body is directionally porous with an arbitrary
$\lambda\subset (0,1)$. But for sets with directionally porous
boundaries, the property of Proposition \ref{body} holds
(see e.g. \cite[p.~167]{BenyaminiLindenstrauss}).
\end{proof}

\begin{remark}
Simple examples show that the non-degeneracy of a (non-Gaussian)
measure does not guarantee $\mu$-continuity, even for the class
$\mathcal{B}$.
\end{remark}

Let us pass to the class $\mathcal{H}_F$. A subspace $F\subset X^*$
is said to be {\it total} if for every $x\in S_X$ there is a
functional $x^*\in F$ such that $x^*(x)\ne 0$. \medskip

\noindent\emph{Proof of Proposition} \ref{halfspace}. Suppose, on the
contrary, that $\mathcal{H}_F$ is uniformly $\mu$-continuous for some
measure $\mu$. Every finite-dimensional subspace $E\subset X$
is closed in the weak topology $w(X,F)$ for a total subspace $F$. So,
by the Hahn-Banach theorem, $F$ is contained in a hyperplane of
a form $\{x\in X:x^*(x)=0\}\,$, $\;x^*\in F$. This hyperplane is, of
course, a boundary of the half-space $H_{x^*0}$. Therefore, by
the definition of uniform $\mu$-continuity, $\forall\,\varepsilon>0$
there is a $\delta>0$ such that $\mu(E_\delta)<\varepsilon$ for each
finite-dimensional subspace $E\subset X$, where $E_\delta$ is the
$\delta$-neighborhood of $E$. But $\mu(E_\delta)>1-\varepsilon$
for the subspace $E$ from Lemma \ref{lemmaL}. If $\varepsilon<\frac{1}{2}$,
we get a contradiction. \qed
\medskip

For spaces $\ell_p$, $1\le p<\infty$, and $F=X^*$ Proposition~
\ref{halfspace} is a part of \cite[Proposition~3]{Topsoe3}.

\begin{cor}\label{H-ideal}
No infinite-dimensional Banach space $X$ is $\mathcal{H}_F$-ideal
for any total linear subspace $F\subset X^*$.
\end{cor}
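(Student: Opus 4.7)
The plan is essentially to combine Proposition \ref{halfspace} with Proposition \ref{body}. Recall that, by definition, $X$ fails to be $\mathcal{H}_F$-ideal precisely when there exists a measure $\mu$ for which $\mathcal{H}_F$ is $\mu$-continuous but not uniformly $\mu$-continuous. Proposition \ref{halfspace} already supplies the second half for free: since $\mathcal{H}_F$ is uniformly discontinuous, no measure whatsoever makes it uniformly $\mu$-continuous. So the only task is to exhibit one measure $\mu$ on $X$ with respect to which every element of $\mathcal{H}_F$ is $\mu$-continuous.

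Here I would invoke Proposition \ref{body}. Since $X$ is separable, it carries a non-degenerate Gaussian measure $\mu$ (standard: pick a Schauder basis $\{e_n\}$ and form $\mu$ as the distribution of $\sum_n a_n\xi_n e_n$ for i.i.d.\ standard Gaussians $\xi_n$ and scalars $a_n>0$ decaying fast enough to ensure almost sure convergence). Each half-space $H_{x^*t}$ with $x^*\in F\setminus\{0\}$ is a convex body in $X$, so $\mathcal{H}_F\subset \mathcal{C}$, and Proposition \ref{body} gives $\mu(\partial H_{x^*t})=0$ for every such half-space. Thus $\mathcal{H}_F$ is $\mu$-continuous.

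Combining the two observations: $\mathcal{H}_F$ is $\mu$-continuous (for this Gaussian $\mu$) yet, by Proposition \ref{halfspace}, not uniformly $\mu$-continuous. Hence $X$ is not $\mathcal{H}_F$-ideal, which is the claim.

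There is no real obstacle; the only point that needs a word of justification is the existence of a non-degenerate Gaussian measure on an arbitrary separable Banach space, but this is a standard fact and is in any case already implicit in the hypotheses of Proposition \ref{body}. So the corollary is a one-line deduction: the measure produced by Proposition \ref{body} is a witness that the implication ``$\mu$-continuity $\Rightarrow$ uniform $\mu$-continuity'' fails for $\mathcal{H}_F$, since the left side holds while the right side is forbidden by Proposition \ref{halfspace}.
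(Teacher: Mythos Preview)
Your proof is correct and follows essentially the same route as the paper: take a non-degenerate Gaussian measure $\mu$, apply Proposition~\ref{body} to get $\mu$-continuity of $\mathcal{H}_F$, and apply Proposition~\ref{halfspace} to rule out uniform $\mu$-continuity. The paper's version is terser but the logic is identical.
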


\begin{proof}
According to Proposition \ref{body}, every half-space is
$\mu$-continuous for each non-degenerated Gaussian measure $\mu$
on $X$. However, by Proposition \ref{halfspace}, the class
$\mathcal{H}_F$ is not uniformly $\mu$-continuous. Therefore, $X$
is not $\mathcal{H}_F$-ideal.
\end{proof}

In connection with Corollary \ref{H-ideal}, it is interesting to
localize natural measures $\mu$ and uniformly $\mu$-continuous
subclasses $\mathcal{U}\subset \mathcal{H}$. Two such examples
were presented in \cite{Sazonov} and \cite{MatsakPlichko}; we will
present one more. The following statement shows a relation between
the uniform $\mu$-continuity of balls and the uniform
$\mu$-continuity of half-spaces whose boundaries are tangent to
these balls. Let us recall necessary definitions.
\medskip

A hyperplane $D$ is {\it tangent} to a ball $B$ at a point $x$ if
$x\in D\cap B\subset \partial B$. A point $x\in
\partial B$ is called a {\it smooth point} of $B$ if for every
$y\in X$ there exists $\lim_{\lambda\to 0}\lambda^{-1}(\|x+\lambda y\|-\|x\|)$
\cite[p.~409]{BenyaminiLindenstrauss}.
Geometrically it means that if (for example) a hyperplane $D$
touches a ball $B_1(z)$ ($\|z\|=1$) at $0$ and $x\in D$ then the
distance $\mathrm{dist}(x,\partial B_k(kz))\to 0$ as $k\to\infty$.
A norm of a Banach space is called \emph{smooth} if each
$x\in \partial B$ is a smooth point.
\medskip

Denote by $\mathcal{H}_S$ the class of half-spaces whose
boundaries are tangent to balls of a Banach space $X$ at smooth
points of these balls.
\begin{prop}\label{tangent}
If the class of balls $\mathcal{B}$ in a Banach space $X$ is
uniformly $\mu$-continuous then the class of half-spaces
$\mathcal{H}_S$ is uniformly $\mu$-continuous too.
\end{prop}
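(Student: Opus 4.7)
The plan is to approximate each $H\in\mathcal{H}_S$ by a very large ball tangent to its boundary $D$ at the designated smooth point $x_0$, and then transfer the uniform estimate from $\mathcal{B}$. Fix $\varepsilon>0$. Uniform $\mu$-continuity of $\mathcal{B}$ gives $\delta_0>0$ with $\mu(\partial_{\delta_0}B)<\varepsilon/2$ for every $B\in\mathcal{B}$, and Ulam's theorem gives a compact $K\subset X$ with $\mu(X\setminus K)<\varepsilon/2$. We will show that $\delta:=\delta_0/2$ witnesses uniform $\mu$-continuity of $\mathcal{H}_S$.

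Given $H\in\mathcal{H}_S$ with boundary $D$ tangent to $B_r(z)$ at a smooth point $x_0$, set $u:=(z-x_0)/r$ (so $\|u\|=1$) and let $\psi\in X^*$ be the Gateaux derivative of $\|\cdot\|$ at $u$; smoothness gives $\|\psi\|=1$ and $D=\{y:\psi(y-x_0)=0\}$. For each integer $k\ge 1$ the ball $B_k:=B_{kr}(x_0+kru)$ is a member of $\mathcal{B}$ tangent to $D$ at $x_0$. A direct computation with $w:=x-x_0$ gives the identity
\[
\|x-(x_0+kru)\|-kr=-\psi(w)+\eta_k(w),
\]
where $\eta_k(w):=kr\bigl(\|u-w/(kr)\|-1\bigr)+\psi(w)$. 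The subdifferential inequality at $u$ yields $\eta_k\ge 0$, and Gateaux differentiability yields $\eta_k(w)\to 0$ pointwise in $w$ as $k\to\infty$.

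The key step is to upgrade this pointwise statement to uniform convergence on the compact set $K-x_0$. Set $g_w(s):=\|u-sw\|-1+s\psi(w)$; this is convex in $s$ with $g_w(0)=0$, so $s\mapsto g_w(s)/s$ is non-decreasing on $s>0$. Since $\eta_k(w)=g_w(1/(kr))\cdot kr$, the sequence $\{\eta_k\}_{k\ge 1}$ is non-increasing in $k$. Dini's theorem applied to this continuous non-increasing sequence on the compact set $K-x_0$ then supplies uniform convergence to $0$; we choose $k_H$ with $\sup_{w\in K-x_0}\eta_{k_H}(w)<\delta_0/2$.

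To conclude, for $x\in\partial_\delta H\cap K$ one has $|\psi(x-x_0)|=\mathrm{dist}(x,D)<\delta=\delta_0/2$ and $\eta_{k_H}(x-x_0)<\delta_0/2$, whence $\bigl|\|x-(x_0+k_H ru)\|-k_H r\bigr|<\delta_0$, i.e.\ $x\in\partial_{\delta_0}B_{k_H}$. Therefore $\mu(\partial_\delta H)\le\mu(\partial_{\delta_0}B_{k_H})+\mu(X\setminus K)<\varepsilon$, uniformly in $H$. The main obstacle is exactly the passage from Gateaux (pointwise) differentiability to a uniform first-order estimate on the compact carrier of $\mu$; convexity of $g_w$ in the scaling parameter, combined with Dini's theorem, is what supplies this without any Fr\'echet assumption.
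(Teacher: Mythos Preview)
Your proof is correct and shares the paper's core geometric idea: blow up the ball tangent to $D$ at the smooth point, so that its annulus $\partial_{\delta_0}B_{k}$ eventually swallows any fixed portion of the strip $\partial_\delta H$. The difference is purely in how the passage from pointwise (Gateaux) convergence to a uniform estimate is carried out. The paper argues by contradiction and finitizes via separability: it covers enough $\mu$-mass of $\partial_\delta H$ by finitely many small balls with centers $z_1,\dots,z_n\in D$, then uses smoothness only at these finitely many points to push them into $\partial_\delta B_k(kz)$ for large $k$. You instead finitize via Ulam tightness, restricting to a single compact $K$, and then extract the needed uniformity on $K-x_0$ from the convexity-induced monotonicity of $\eta_k$ together with Dini's theorem. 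The paper's argument is a bit shorter and more elementary (no Dini, no analysis of $g_w$); your argument is direct rather than by contradiction and makes the first-order structure explicit, which is a nice bonus.
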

\begin{proof} Suppose that $\mathcal{H}_S$ is not
$\mu$-continuous. Then there exists $\varepsilon>0$ such that
$\forall\,\delta>0$ there is a half-space $H\in \mathcal{H}_S$
(depending on $\delta$) with $\mu(\partial_\delta H)>\varepsilon$.
Since $X$ is separable, there exists a finite collection of balls
$(K_i)_1^n$ (depending on $\delta$), with centers
$(z_i)_1^n\subset \partial H$, each of radius $<\delta$, such that
$\mu(\cup_1^nK_i)>\varepsilon$. Let $B_1(z)$ be a ball for which
$\partial H$ is tangent and let $x$ be a point of tangency.
Shifting, by Lemma \ref{shift}, the whole picture on $-x$, one may
assume $x=0$. Since $x=0$ is a smooth point of $B_1(z)$, for all
$i$
$$\mathrm{dist}(z_i,\partial B_k(kz))\to 0\;\;\;\mbox{as}\;\;\;k\to\infty.$$
Hence, for sufficiently large $k\,$,
$$\cup_1^nK_i\subset \partial_\delta B_k(kz),\;\;\mbox{so}\;\;
\mu(\partial_\delta B_k(kz))>\varepsilon.$$ This contradicts the
uniform $\mu$-continuity of $\mathcal{B}$.
\end{proof}

\begin{remark}
This proof uses an idea from \cite[Lemma 3]{Szabados}, where
it was proved that the uniform $\mu$-continuity of balls implies the
uniform $\mu$-continuity of half-spaces in a Hilbert space.
I would like to mention that the word ``uniform'' is missing in the
statement of \cite[Lemma 3]{Szabados}. Without this word the
statement is false, an easy example can be constructed even in
two-dimensional Euclidean space. We will return below to relations
between the $\mu$-continuity of balls and half-spaces.
\end{remark}

From now on by $\mathcal{H}_S$ we denote the class of half-spaces
of the form $H_{st}=\{x\in C(S): x(s)\le t\}\,$, $s\in S\,$, $t\in
\mathbb{R}$. By argumentation of the following remark, for $C(S)$
this ``new'' class $\mathcal{H}_S$ is contained in the ``old one''.

\begin{remark} \label{C(K)-ideal}
If the above mentioned result of Aniszczyk is true then the space
$C(S)$ is $\mathcal{H}_S$-ideal.
\end{remark}

Indeed, each half-space $H_{st}$ touches the ball $B_t(0)$ at a
smooth point -- a continuous function whose modulus attains
maximum at the unique point $s$. Since, by Aniszczyk's result
\cite{Aniszczyk}, the space $C(S)$ is $\mathcal{B}$-ideal,
Proposition \ref{tangent} implies the $\mathcal{H}_S$-ideality
of $C(S)$.

\begin{remark}
Remark \ref{C(K)-ideal} together with Corollary \ref{H-ideal}
show that a space $X$ can be $\mathcal{H}_F$-ideal but not
$\mathcal{H}_{\mathrm{lin}F}$-ideal.
\end{remark}
We return to relations between the $\mu$-continuity of balls and
half-spaces. Let $X$ be a Banach space. A point $x$ of an
arbitrary ball $B\subset X$ is called {\it exposed} if there is a
functional $x^*\in X^*$ so that $x^*(x)>x^*(y)$ for all $y\in B$,
$y\ne x$ \cite[p.~108]{BenyaminiLindenstrauss}.

\begin{prop}\label{cobtinuous-not}
Any Banach space $X$ admits a measure $\mu$ for which the class
$\mathcal{B}$ is $\mu$-continuous, but $\mathcal{H}$ is not.
\end{prop}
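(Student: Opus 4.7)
The plan is to take $\mu$ as a non-degenerate Gaussian measure supported on a carefully chosen closed hyperplane $H_0 \subset X$. Assuming $\dim X \geq 2$, first choose a nonzero $x^* \in X^*$ such that $H_0 := \ker x^*$ does not coincide with the closed direction subspace $\overline{\mathrm{span}}(F - F)$ of any maximal flat face $F$ of the unit sphere $S_X$. In any separable Banach space with $\dim X \geq 2$ such an $x^*$ should exist, because the ``forbidden'' hyperplanes of $X$ are in bijection (modulo scaling) with a proper and typically meagre subfamily of $X^*$ --- countable in $c_0$ or $\ell_p$, and in any case much thinner than all of $X^*$. Let $\mu$ be a non-degenerate Gaussian probability measure on the Banach space $H_0$, regarded as a Borel probability on $X$ concentrated on $H_0$.

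Then $\mathcal{H}$ fails $\mu$-continuity immediately: the half-space $H := \{x \in X : x^*(x) \leq 0\}$ has $\partial H = H_0$ and $\mu(\partial H) = \mu(H_0) = 1$. To establish that $\mathcal{B}$ is $\mu$-continuous, fix a ball $B_r(z) \subset X$, let $d := \mathrm{dist}(z, H_0)$, and set $g(y) := \|y - z\|$ on $H_0$. If $d > r$ the level set $\{y \in H_0 : g(y) = r\}$ is empty. If $d < r$, then $B_r(z) \cap H_0$ is a convex body in $H_0$; the transversality ensures that no face of $\partial B_r(z)$ is ``flat along $H_0$'', which forces $\{y \in H_0 : g(y) = r\}$ to coincide with the $H_0$-boundary of $B_r(z) \cap H_0$, and Proposition~\ref{body} applied inside $H_0$ yields $\mu$-measure zero. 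If $d = r$ (the tangent case), the level set is the set of nearest points from $z$ to $H_0$, a flat face of $\partial B_r(z)$ lying entirely in $H_0$; by transversality its direction space is strictly contained in $H_0$, so the face sits inside a proper closed affine subspace of $H_0$ and again has $\mu$-measure zero.

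The main obstacle is the geometric selection of $x^*$ in the first step: verifying cleanly, for an arbitrary separable Banach space, the existence of a hyperplane transverse to the facial structure of $S_X$. Once this is in hand, the case split $d > r$, $d < r$, $d = r$ handles the subtle behaviour of balls in possibly non-strictly-convex spaces, with the tangent case being precisely where the absence of a face ``parallel to $H_0$'' is essential.
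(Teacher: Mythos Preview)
Your overall strategy---place a non-degenerate Gaussian $\mu$ on a well-chosen hyperplane $H_0$, observe that $\mathcal{H}$ fails $\mu$-continuity trivially since $\mu(H_0)=1$, and then verify $\mu$-continuity of $\mathcal{B}$ by a case split on $d=\mathrm{dist}(z,H_0)$ versus $r$---is exactly the route the paper takes. One small remark: in the case $d<r$ you do not need any transversality at all. A short convexity argument (pick $y_0\in H_0$ with $\|y_0-z\|<r$, use monotonicity of slopes of the convex function $s\mapsto\|(1-s)y_0+sy-z\|$) shows that $\{y\in H_0:\|y-z\|=r\}$ always coincides with the $H_0$-boundary of the convex body $B_r(z)\cap H_0$, so Proposition~\ref{body} applies directly. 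Transversality is only relevant in the tangent case $d=r$.

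The genuine gap is precisely the step you flag as ``the main obstacle'': you do not prove that a hyperplane transverse to the facial structure of $S_X$ exists in an arbitrary separable Banach space. The appeal to the forbidden directions being ``typically meagre'' or ``countable in $c_0$ or $\ell_p$'' is not an argument; already in $\ell_1$ there are uncountably many forbidden directions (one for each sign sequence $\epsilon\in\{-1,1\}^{\mathbb N}$, since the face $\{x\in B_{\ell_1}:\sum\epsilon_i x_i=1\}$ has direction $\ker\epsilon$), so a cardinality heuristic will not do.

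The paper closes this gap by a clean reflexive/non-reflexive dichotomy. If $X$ is reflexive, the unit ball has an exposed point $x$, and for its exposing functional $x^*$ the faces $F_{\pm x^*}=\{w\in B_1(0):x^*(w)=\pm\|x^*\|\}$ are the singletons $\{\pm x\}$; hence in the tangent case $d=r$ the set $B_r(z)\cap H_0$ is a single point. If $X$ is non-reflexive, James' theorem provides $x^*\in X^*$ that does not attain its norm; then $F_{\pm x^*}=\emptyset$, the infimum $d=\mathrm{dist}(z,H_0)$ is never attained, and $d=r$ forces $B_r(z)\cap H_0=\emptyset$. In either case one obtains a hyperplane $D$ such that $D\cap B$ is empty, a single point, or a convex body in $D$ for every ball $B$, and Proposition~\ref{body} (applied inside $D$) finishes the argument.

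So your framework is correct, but the decisive lemma---existence of the hyperplane---is supplied in the paper by exposed points and James' theorem rather than by any category or cardinality considerations.
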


\begin{proof} If $X$ is reflexive then its unit ball contains an
exposed point \cite[p.~110]{BenyaminiLindenstrauss}, i.e. there
exists a point $x\in S_X$ and a hyperplane $D$ such that $D\cap
S_X=\{x\}$. If $X$ is not reflexive then, by the James theorem,
there exists a functional $x^*\in X^*$ which does not attain its
norm \cite[p.~14]{DevilleGZ}. In both cases there is a hyperplane
$D$ such that for every ball $B$, the intersection $D\cap B$ either
is empty or contains a single point or is a convex body in $D$.

Take a Gaussian measure $\mu$ which is concentrated and
non-generated on $D$. By Proposition \ref{body}, the class
$\mathcal{B}$ is $\mu$-continuous and, by Proposition
\ref{halfspace}, $\mathcal{H}$ is not. \end{proof}

The above mentioned result of Tops{\o}e \cite[p.~152]{Topsoe3}
shows, that the converse to the previous statement is false. In
$c_0$, even the $\mu$-continuity of the subclass
$\mathcal{H}_N\subset\mathcal{H}$ already implies the
$\mu$-continuity of $\mathcal{B}$ (see Section 1). However, under
additional conditions a converse to Proposition
\ref{cobtinuous-not} is valid. A Banach space $X$ is called
\emph{rotund} if $\|x+y\|<\|x\|+\|y\|$ for all linearly
independent elements $x,y\in X$.

\begin{prop}
Every rotund Banach space $X$ admits a measure $\nu$ for which the
class $\mathcal{H}$ is $\nu$-continuous, but $\mathcal{B}$ is not.
\end{prop}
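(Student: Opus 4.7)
The plan is to take $\nu$ to be the pushforward, under the radial retraction $T\colon X\setminus\{0\}\to\partial B_1(0)$ with $T(y)=y/\|y\|$, of a non-degenerate Gaussian measure $\mu$ on $X$; such $\mu$ exists by separability of $X$. Then $\nu$ is supported on $\partial B_1(0)$ with $\nu(\partial B_1(0))=1$, so the unit ball itself already witnesses that $\mathcal{B}$ is not $\nu$-continuous. The substantive work is to prove $\nu$-continuity of every half-space $H_{x^*,t}$, $x^*\in X^*\setminus\{0\}$, which unravels to
\[
\mu\bigl(\{y\neq 0:\,x^*(y)=t\|y\|\}\bigr)=0.
\]

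When $|t|>\|x^*\|$ the set above is empty. The case $|t|=\|x^*\|$ is where rotundity enters essentially: the norming set $\{e\in\partial B_1(0):x^*(e)=\|x^*\|\}$ is an exposed face of $B_1(0)$, and rotundity collapses every such face to at most a single point $e_0$; hence the set in question lies in the line $\mathbb{R}e_0$, which carries no $\mu$-mass by non-degeneracy of $\mu$. Without rotundity this step genuinely fails (e.g.\ in $\ell_\infty^2$ the analogous cone has positive Lebesgue measure), so this is the only place where the hypothesis is used.

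The remaining range $|t|<\|x^*\|$ is where I expect the main technical obstacle. My plan is to fix $h\in X$ with $x^*(h)=1$ and decompose $y=\alpha h+z$ with $\alpha=x^*(y)$ and $z\in\ker x^*$; for each fixed $z$ the equation $\alpha=t\|\alpha h+z\|$ has at most two solutions in $\alpha$, since the right-hand side is convex and coercive in $\alpha$ while the left-hand side is affine. Under $\mu$ the pair $(\alpha,z)$ is jointly Gaussian, and the conditional law of $\alpha$ given $z$ is a Gaussian with strictly positive variance: if it were degenerate, then $\alpha$ would agree $\mu$-almost surely with $L(z)$ for some continuous linear $L$ on $\ker x^*$, giving $x^*(Y)=L(Y-x^*(Y)h)$ $\mu$-almost surely, which full support of $\mu$ would promote to an identity on $X$; specializing to $y=h$ then forces the contradiction $1=x^*(h)=L(0)=0$. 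Being a genuine Gaussian on $\mathbb{R}$, this conditional law is atomless and assigns zero mass to the at-most-two-point solution set, and Fubini then delivers the vanishing on the whole space, completing the argument.
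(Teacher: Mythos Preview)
Your construction of $\nu$ as the radial pushforward of a non-degenerate Gaussian $\mu$ is exactly the paper's construction, and your treatment of the cases $|t|>\|x^*\|$ and $|t|=\|x^*\|$ matches the paper's. The divergence is in the main case $|t|<\|x^*\|$. The paper observes that the set $\{su:u\in D\cap S_X,\ s\ge 0\}$ (with $D=\{x^*=t\}$) is precisely the boundary of the convex body $\{su:u\in D\cap B_1(0),\ s\ge 0\}$, and then simply invokes Proposition~\ref{body} (boundaries of convex bodies are $\mu$-null for non-degenerate Gaussian $\mu$). This is a one-line argument and, incidentally, does not use rotundity at all in this case. Your Fubini/conditioning route also works, but it is heavier and has two soft spots worth tightening. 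First, ``at most two solutions'' from convexity alone allows an interval of solutions; you are tacitly using that in a rotund space $\alpha\mapsto\|\alpha h+z\|$ is \emph{strictly} convex for $z\neq 0$, so rotundity is in fact used here too, contrary to your remark. Second, in infinite dimensions the conditional expectation $E[\alpha\mid z]$ is a priori only a \emph{measurable} linear functional (an $L^2$-limit of continuous ones), not a continuous $L$; the contradiction with non-degeneracy still goes through, but the sentence ``for some continuous linear $L$'' needs adjusting. The paper's convex-body argument sidesteps both issues.
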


\begin{proof} Let $\mu$ be a non-degenerated Gaussian measure on
$X$. Given arbitrary Borel subsets $A\subset S_X$ and $U\subset X$, put
$$\nu(A):=\mu\{tx: x\in A\,,\;t\ge 0\}\;\;\;\mbox{and}\;\;\;
\nu(U):=\nu(U\cap S_X).$$

Of course, $\nu$ is a measure on $X$ and $\nu(S_X)=1$.
Since $X$ is rotund, given a hyperplane $D$, $D\cap S_X$ is either
empty or contains a single point, or is a boundary of the convex body
$D\cap B_1(0)$ in $D$. The first and second cases are not
interesting, and for the third
$$\nu(D)=\nu(D\cap S_X)=\mu\{tx: x\in D\cap S_X\,,\;t\ge 0\}.$$
However, the set $\{tx: x\in D\cap S_X\,,\;t\ge 0\}$ is a boundary
of the convex body $\{tx: x\in D\cap B_1(0)\,,\;t\ge 0\}$.
According to Proposition \ref{body}, $\mu$-measure of such boundary
equals to zero. So, the class $\mathcal{H}$ is $\nu$-continuous.
Obviously, $\mathcal{B}$ is not $\nu$-continuous.
\end{proof}

Now we present the promised abstract version of Tops{\o}e's
statement on $c_0$. A Banach space $X$ is called \emph{polyhedral}
\cite{FonfLP} if a ball of every of its finite-dimensional
subspace is a polyhedron.

\begin{prop}
For any polyhedral Banach space $X$ there is a countable subset
$F\subset S_{X^*}$ so that the class $\mathcal{B}$ is
$\mu$-continuous if and only if $\mathcal{H}_F$ is.
\end{prop}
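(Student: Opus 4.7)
The plan is to construct a countable $F \subset S_{X^*}$ suited to the polyhedral structure, then to derive both implications from a decomposition of each ball's boundary into a countable union of flat faces. First I would invoke Fonf's structural result for separable polyhedral spaces \cite{FonfLP} to choose a countable $F = \{f_n\}_{n\in\mathbb{N}} \subset S_{X^*}$ with: (i) $\|x\| = \max_{f \in F} f(x)$ for every $x \in X$ (the maximum is attained); and (ii) for each $n$ there exists $x_n \in S_X$ with $f_n(x_n) = 1$ and $c_n := \sup_{m \ne n} f_m(x_n) < 1$. Property (ii) is the essential polyhedral input: each facet of $B_X$ has a point separated from every other facet by a positive gap.

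The implication ``$\mathcal{H}_F$ is $\mu$-continuous $\Rightarrow$ $\mathcal{B}$ is $\mu$-continuous'' is then immediate: from $\|x - z\| = \max_{f \in F} f(x - z)$ one obtains
$$\partial B_r(z) \subset \bigcup_{f \in F} \{x : f(x) = r + f(z)\} = \bigcup_{f \in F} \partial H_{f,\,r+f(z)},$$
a countable union of $\mu$-null sets, and countable subadditivity finishes the job.

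For the converse, fix $f_n$ and $t \in \mathbb{R}$; I plan to prove $\mu(\{f_n = t\}) = 0$ by exhibiting the hyperplane $\{f_n = t\}$ as a countable union of pieces of $\mu$-null ball boundaries. Put $z_r := (t - r) x_n$. If $f_n(x) = t$, then $f_n(x - z_r) = r$, and for any $m \ne n$ the inequality $f_m(x - z_r) \le r$ rearranges to $r(1 - f_m(x_n)) \ge f_m(x) - t f_m(x_n)$. Since $1 - f_m(x_n) \ge 1 - c_n > 0$, this holds as soon as $r \ge (\|x\| + |t|)/(1 - c_n)$; hence $\|x - z_r\| = r$ for all sufficiently large $r$, i.e. $x \in \partial B_r(z_r)$. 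Therefore
$$\{f_n = t\} = \bigcup_{k=1}^{\infty}\bigl(\{f_n = t\} \cap \partial B_k(z_k)\bigr),$$
a countable union of $\mu$-null sets.

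The main obstacle is step (ii) in the construction of $F$. A bare countable norming subset of $S_{X^*}$ (available in every separable space) is not sufficient; what is needed is a \emph{uniform} gap between each $f_n$ and the rest of $F$, witnessed by a single vector $x_n \in S_X$. It is exactly this uniform separation that makes the cover $\{f_n = t\} = \bigcup_k \partial B_k(z_k)$ exhaustive, and it is exactly here that polyhedrality, through Fonf's theorem, enters in an essential way; once (i) and (ii) are in hand, both directions reduce to elementary countable subadditivity applied to a face decomposition of $\partial B_r(z)$.
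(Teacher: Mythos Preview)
Your proof is correct and matches the paper's approach: both invoke Fonf's theorem to obtain a countable $F\subset S_{X^*}$ with the max--norming property (your (i), the paper's (a)) and the interior-facet/uniform-gap property (your (ii), the paper's (b)), after which each implication is just countable subadditivity over a facet decomposition. The only cosmetic difference is in the converse direction: the paper covers the hyperplane $\{f_n=t\}$ by countably many \emph{translates} of one fixed-radius facet, whereas you cover it by the expanding facets of the balls $B_k\bigl((t-k)x_n\bigr)$ along the ray through $x_n$.
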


\begin{proof} Put $D_{x^*}=\{x\in X:x^*(x)=1\}$. In view of the
Fonf theorem on the structure of a sphere in a polyhedral space
\cite[p.~655]{FonfLP}, there is a countable subset $F\subset
S_{X^*}$ so that:
\begin{enumerate}
\item[\sf $(a)$] $S_X=\bigcup_{x^*\in F}(S_X\cap D_{x^*})$ and\smallskip

\item[\sf $(b)$] each set $S_X\cap D_{x^*}$, $x^*\in F$, has an
interior point in $D_{x^*}$.
\end{enumerate}

Now, let $\mu(\partial B_r(z))>0$ for some $r$ and $z$. Then, by $(a)$,
there are $x^*\in F$ and $t\in \mathbb{R}$ such that
$$\mu\{\partial H_{x^*t}\cap \partial B_r(z)\}>0$$
hence, $\mu(\partial H_{x^*t})>0$.

Conversely, let $\mu(\partial H_{x^*t})>0$ for some $x^*\in F$ and
$t\in \mathbb{R}$. By $(b)$, for some $r$ and $z$ the intersection
$\partial H_{x^*t}\cap \partial B_r(z)$ has an interior point in
$\partial H_{x^*t}$. Then
$$\mu\{\partial H_{x^*t}\cap \partial B_{r}(z')\}>0$$
for a translate $B_r(z')$ of the ball $B_r(z)$. Hence,
$\mu(\partial B_{r}(z'))>0.$
\end{proof}


\textbf{\emph{2. Class $\mathcal{B}$}}.\medskip

\noindent\emph{Proof of Corollary} \ref{smooth}. In fact, if the
class $\mathcal{B}$ was uniformly $\mu$-continuous then, according
to Proposition \ref{tangent}, $\mathcal{H}$ would be uniformly
$\mu$-continuous too. This contradicts Proposition \ref{halfspace}.
\qed \medskip

Corollary \ref{smooth} implies the uniform discontinuity of
$\mathcal{B}$ in the spaces $\ell_p$ and $L_p$ for $1<p<\infty$.

\begin{prop}\label{Mazur}
Every $($infinite- or finite-dimensional$)$ Banach space $X$
admits a $($degenerated$)$ Gaussian measure $\mu$ for which the
class $\mathcal{B}$ is not uniformly $\mu$-continuous.
\end{prop}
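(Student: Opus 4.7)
The plan is to concentrate $\mu$ on a single line $L \subset X$ through the origin along which a one-parameter family of large balls $B_r(ry)$ becomes nearly flat as $r \to \infty$. Once the boundary of such a ball lies within $\delta$ of a macroscopic interval of $L$, the $\delta$-neighborhood of the boundary inherits a fixed positive $\mu$-mass, contradicting uniform $\mu$-continuity. The delicate point is to align $L$ with a tangent direction at a smooth point of the unit sphere; it is precisely this alignment that forces the required flatness.

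For the degenerate case $\dim X = 1$, simply take $\mu := \delta_0$, a Gaussian of zero variance: each ball $B_r(r) = [0,2r]$ has $0$ on its boundary, so $\mu(\partial_\delta B_r(r)) = 1$ for every $\delta > 0$, and uniform $\mu$-continuity fails at once. Otherwise fix a two-dimensional subspace $Y \subset X$. The unit sphere $S_Y$ is a centrally symmetric convex curve in the plane, hence has at most countably many non-smooth points, so I pick a smooth point $y \in S_Y$, let $y^* \in Y^*$ be its unique supporting functional, choose a unit vector $e \in \ker y^*$, and set $L := \mathbb{R} e$. Let $\mu$ be a centered non-degenerate Gaussian on $L$; viewed on $X$, it is concentrated on the one-dimensional subspace $L$, so it is a degenerate Gaussian on $X$.

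The core estimate rests on the identity (valid since $te - ry \in Y$)
\[
\operatorname{dist}\bigl(te,\partial B_r(ry)\bigr)\;=\;\bigl|\,\|te - ry\| - r\,\bigr|\;=\;r\,\bigl|\phi(t/r) - 1\bigr|,\qquad \phi(s):=\|y - s e\|,
\]
where $\phi$ is convex with $\phi(0) = 1$, and where the smoothness of $y$ together with $y^*(e) = 0$ forces both one-sided derivatives of $\phi$ at $0$ to vanish, giving $\phi(s) - 1 = o(|s|)$ as $s \to 0$. Fix $I := \{t e : |t| \le 1\}$, so $c := \mu(I) > 0$. Given $\delta > 0$, the $o(|s|)$ behavior yields $\eta > 0$ with $|\phi(s) - 1| \le (\delta/2)|s|$ for $|s| \le \eta$; taking $r \ge 1/\eta$ gives $\operatorname{dist}(te, \partial B_r(ry)) \le \delta/2 < \delta$ for every $te \in I$, so $I \subset \partial_\delta B_r(ry)$ and $\mu(\partial_\delta B_r(ry)) \ge c$. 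Since $c$ is independent of $\delta$, $\mathcal{B}$ is not uniformly $\mu$-continuous.

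The main obstacle is securing $\phi(s) - 1 = o(|s|)$: the triangle inequality only yields the trivial bound $|\phi(s) - 1| \le |s|$, which upon multiplication by $r$ merely says $\operatorname{dist}(te, \partial B_r(ry)) \le |t|$ --- useless for making the $\delta$-neighborhood of the sphere large. Upgrading this to $o(|s|)$ requires the tangency $y^*(e) = 0$ at a genuine smooth point, and it is for this that the reduction to the plane (where smooth points of a convex curve are co-countable) is so convenient. Lemma~\ref{shift} is used implicitly to place the origin at the point of tangency.
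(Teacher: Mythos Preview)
Your proof is correct and follows the same geometric idea as the paper's --- place a degenerate Gaussian on a tangent flat at a smooth point and exploit the fact that large spheres $B_r(ry)$ flatten out there --- but the execution differs in two notable ways. First, the paper invokes Mazur's theorem to obtain a smooth point of $S_X$ and then concentrates $\mu$ on the full tangent \emph{hyperplane}; you instead pass to a two-dimensional subspace $Y$, use the elementary fact that a planar convex curve has only countably many non-smooth points, and concentrate $\mu$ on a single tangent \emph{line}. This buys you a more self-contained argument (no Mazur), and the one-variable function $\phi(s)=\|y-se\|$ lets you carry out the flatness estimate explicitly rather than through the paper's covering by small balls $(K_i)$. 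Second, because all distance computations $\operatorname{dist}(te,\partial B_r(ry))=|\|te-ry\|-r|$ live inside $Y$, you only need $y$ to be smooth in $Y$, not in $X$ --- a genuine simplification. The trade-off is that the paper's hyperplane measure is reused immediately in Proposition~\ref{extreme-smooth}, where a full-codimension-one Gaussian is needed; your line measure would not suffice there.
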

\begin{proof} According to the Mazur theorem
\cite[p.~91]{BenyaminiLindenstrauss}, each ball $B_1(z)$ of $X$
has a smooth point $x$ (of course, we assume $\dim X>1$). Let $D$
be a hyperplane in $X$, tangent the ball $B_1(z)$ at a point $x$.
Take a Gaussian measure concentrated on $D$. Below we repeat the
arguments  of Proposition \ref{tangent}. Namely, given
$0<\varepsilon<1$, for every $\delta>0$ there exist balls
$(K_i)_1^n$ of $D$ (depending on $\delta$), with centers
$(z_i)_1^n$, each of radius $<\delta$, such that
$\mu(\cup_1^nK_i)>\varepsilon$. Shifting, by Lemma \ref{shift},
the whole picture on $-x$, one may assume $x=0$ (then $\|z\|=1$).
Since $B_1(z)$ is smooth at point $0$, the distance
$\mathrm{dist}(z_i,\partial B_k(kz))\to 0$ as $k\to\infty$, for
all $i$. Hence, for sufficiently large $k\,$,
$\;\cup_1^nK_i\subset
\partial_\delta B_k(kz)$, so $\mu(\partial_\delta
B_k(kz))>\varepsilon$. Therefore, $\mathcal{B}$ is not uniformly
$\mu$-continuous. \end{proof}

\begin{prop}\label{extreme-smooth}
Suppose the unit ball of a Banach space $X$ contains a point $x$ which
is exposed and smooth simultaneously. Then $X$ is not
$\mathcal{B}$-ideal.
\end{prop}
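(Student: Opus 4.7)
The plan is to build a measure $\mu$ concentrated on the tangent hyperplane at $x$ which witnesses non-$\mathcal{B}$-ideality. The construction refines the one in Proposition \ref{Mazur}: the exposed hypothesis will preserve $\mu$-continuity of $\mathcal{B}$, while the smooth hypothesis will destroy uniform $\mu$-continuity via the standard tangent-ball trick.

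First I would take an exposing functional $x^*\in X^*$ at $x$, normalized so that $\|x^*\|=x^*(x)=1$; smoothness of $x$ makes $x^*$ the unique such functional. Set $D:=\{y\in X:x^*(y)=1\}$, the unique tangent hyperplane of $B_1(0)$ at $x$, and choose a non-degenerate Gaussian measure $\mu$ supported on $D$ (available since $D$ is a separable Banach space).

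To check that $\mathcal{B}$ is $\mu$-continuous, fix an arbitrary ball $B_r(z)$ and split into cases according to how $D$ meets it. If $B_r(z)\cap D=\emptyset$ there is nothing to prove. If $D$ passes through an interior point of $B_r(z)$, then $B_r(z)\cap D$ is a convex body in $D$ whose boundary in $D$ equals $\partial B_r(z)\cap D$, so Proposition \ref{body} applied inside $D$ yields $\mu(\partial B_r(z)\cap D)=0$. If $D$ is a supporting hyperplane of $B_r(z)$ at some common point $y$, then $(y-z)/r\in S_X$ and $x^*((y-z)/r)=\pm 1$; the exposed property of $x$ then pins $(y-z)/r$ to $\pm x$, so $B_r(z)\cap D=\{z\pm rx\}$ is a single point and has $\mu$-measure zero.

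To check that $\mathcal{B}$ is not uniformly $\mu$-continuous, I would copy the argument of Proposition \ref{Mazur}. Shifting by $-x$ via Lemma \ref{shift}, the hyperplane $D$ becomes tangent to $B_1(-x)$ at $0$. Given $0<\varepsilon<1$ and $\delta>0$, cover a set of $\mu$-mass greater than $\varepsilon$ by finitely many balls $(K_i)_{i=1}^n$ inside $D$ of radii $<\delta$ and centers $z_i\in D$. Smoothness at $0$ of $B_1(-x)$ gives $\mathrm{dist}(z_i,\partial B_k(-kx))\to 0$ as $k\to\infty$ for each $i$, so for $k$ large enough $\bigcup_i K_i\subset\partial_\delta B_k(-kx)$ and hence $\mu(\partial_\delta B_k(-kx))>\varepsilon$. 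Since $\delta$ was arbitrary, uniform $\mu$-continuity fails.

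The main obstacle is the supporting-hyperplane subcase in the $\mu$-continuity argument: without the exposed hypothesis, $B_r(z)\cap D$ could be a higher-dimensional face of $\partial B_r(z)$ carrying positive $\mu$-mass, which would spoil $\mu$-continuity. Exposedness is precisely what collapses this intersection to a singleton. Consistent with this, in $C(S)$, whose $\mathcal{B}$-ideality is Aniszczyk's claim, the unit ball has no exposed points at all, so the proposition does not apply there.
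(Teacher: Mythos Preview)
Your proposal is correct and follows the same approach as the paper: construct a non-degenerate Gaussian measure on the tangent hyperplane $D$ at $x$, use exposedness to show every sphere meets $D$ in either a singleton or the boundary of a convex body (whence $\mu$-continuity via Proposition~\ref{body}), and invoke the tangent-ball argument of Proposition~\ref{Mazur} via smoothness to defeat uniform $\mu$-continuity. The paper compresses your three-case analysis into a single sentence and simply cites Proposition~\ref{Mazur} rather than recapitulating its proof, but the substance is identical.
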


\begin{proof} Let $D$ be a hyperplane, tangent to the unit ball of
$X$ at the point $x$. Take a measure $\mu$ concentrated and
non-degenerated on $D$. By Proposition \ref{Mazur}, the class
$\mathcal{B}$ is not uniformly $\mu$-continuous. Let $B$ be an
arbitrary ball in $X$. Since $x$ is exposed, $B\cap D$ (provided
it is nonempty and does not consist of a single point) is a convex
body in $D$, and $\partial B\cap D=\partial(B\cap D)$. By
Proposition \ref{body}, $\mu(\partial(B\cap D))=0$. Hence, $\mu$
vanished on each sphere of $X$, so $\mathcal{B}$ is
$\mu$-continuous. Therefore $X$ is not $\mathcal{B}$-ideal.
\end{proof}

Note that every Banach space $X$ can be renormed so that in the
new norm the sets of exposed and smooth points are disjoint.
To verify this, one can introduce first a smooth norm in $X$
\cite[p.~89]{BenyaminiLindenstrauss}, and then the new norm as
in (\ref{l-sum}) below. \medskip

\noindent\emph{Proof of Corollary} \ref{smoothConvex}. Consider
three cases.

1. \emph{The space $X$ is smooth and infinite-dimensional.} By
virtue of Proposition \ref{body}, each ball of $X$ is
$\mu$-continuous for each non-degenerated Gaussian measure $\mu$.
On the other hand, $\mathcal{B}$ is uniformly discontinuous, by
Corollary \ref{smooth}. Hence, $X$ is not $\mathcal{B}$-ideal.

2. \emph{$X$ is smooth and finite-dimensional.} Then $S_X$ has
an exposed point \cite[p.~110]{BenyaminiLindenstrauss} which, as
all others, is a smooth point. It remains to apply Proposition
\ref{extreme-smooth}.

3. \emph{$X$ is rotund.} According to the Hahn-Banach theorem,
every point of $S_X$ is exposed. By the mentioned Mazur theorem,
$S_X$ contains a smooth point. We apply Proposition
\ref{extreme-smooth} once more. \qed \medskip

The following statement shows that Corollary \ref{smooth} is not
valid without additional assumptions.

\begin{prop}
Suppose a Banach space has the form $X=Y\oplus L$, where $Y$ is a
closed subspace, $L$ is a one-dimensional subspace, and
\begin{equation}\label{l-sum}
\|y+l\|=\|y\|+\|l\|\,,\;\;\;y\in Y\,,\;\;\;l\in L. \end{equation}
Let $\mu$ be a one-dimensional Gaussian measure on $L$. Then the
class $\mathcal{B}$ is uniformly $\mu$-continuous in $X$.
\end{prop}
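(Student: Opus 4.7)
The plan is to use the $\ell_1$-sum identity (\ref{l-sum}) to reduce the measurement of $\partial_\delta B_r(z)$ to a one-dimensional annulus computation on $L$, and then exploit the boundedness of the Gaussian density.

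First, I would fix a unit vector $e\in L$ so that $L=\mathbb{R}e$, and for an arbitrary ball $B_r(z)$ decompose $z=y_0+l_0$ with $y_0\in Y$, $l_0\in L$. For $l\in L$ we have $l-z=-y_0+(l-l_0)$ with $-y_0\in Y$ and $l-l_0\in L$, so (\ref{l-sum}) yields the key identity
\[
\|l-z\|=\|y_0\|+\|l-l_0\|.
\]
Setting $a:=\|y_0\|$ and writing $l=te$, $l_0=t_0e$, this shows
\[
\partial_\delta B_r(z)\cap L=\{te : r-\delta<a+|t-t_0|<r+\delta\},
\]
which, viewed as a subset of $\mathbb{R}$, is the preimage of the interval $(r-\delta-a,\,r+\delta-a)$ under $t\mapsto|t-t_0|$.

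Next, I would observe that such a set is either empty (when $r+\delta\le a$), a single interval around $t_0$ of length at most $2\delta$ (when $r-\delta\le a<r+\delta$), or a pair of symmetric intervals each of length $2\delta$ (when $a<r-\delta$). In every case its one-dimensional Lebesgue measure is at most $4\delta$. Since $\mu$ is a (non-degenerate) one-dimensional Gaussian on $L$, it admits a density with respect to Lebesgue measure which is bounded by some constant $M$; therefore
\[
\mu(\partial_\delta B_r(z))\le 4M\delta
\]
uniformly in $r$ and $z\in X$. Given $\varepsilon>0$, taking $\delta:=\varepsilon/(4M)$ produces uniform $\mu$-continuity of $\mathcal{B}$.

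The only nontrivial step is the opening identity $\|l-z\|=\|y_0\|+\|l-l_0\|$: it collapses the three-dimensional-looking problem (ball center, ball radius, point on $L$) to measuring a one-dimensional annulus around $l_0$, after which the conclusion is an immediate consequence of the boundedness of the Gaussian density. I would expect no genuine obstacle beyond recognising that (\ref{l-sum}) forces each sphere $\partial B_r(z)$ to meet $L$ in at most two points symmetric about $l_0$.
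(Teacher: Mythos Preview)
Your proof is correct and follows essentially the same route as the paper's: use the $\ell_1$-decomposition to see that $\partial B_r(z)$ meets $L$ in at most two points (equivalently, that $\partial_\delta B_r(z)\cap L$ has one-dimensional length $\le 4\delta$), then invoke the bounded Gaussian density. One small slip in your case analysis: when $r-\delta\le a<r+\delta$ the single interval has length $2(r+\delta-a)$, which can be as large as $4\delta$ (not $2\delta$), but your final bound $\mu(\partial_\delta B_r(z))\le 4M\delta$ is unaffected.
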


\begin{proof} Take an arbitrary ball $B_r(z)$ of $X$. Then
$B_r(z)\cap L\subset \mathrm{lin}(z,L)$, moreover, $\mathrm{lin}(z,L)$
is isometric to the two-dimensional $\ell_1^2$ or to a
one-dimensional space. Hence, the sphere $\partial B_r(z)$
intersects $L$ at at most two points and the length
(in the norm of $X$) of $\partial_{\delta}B\cap L$ is not
greater than $4\delta$. Therefore, $\mu(\partial_{\delta}B)\to 0$
as $\delta\to 0$, uniformly on the balls. \end{proof}

\begin{cor}
Every Banach space $X$ can be renormed so that
in the new norm the class $\mathcal{B}$ becomes uniformly
$\mu$-continuous for some $($degenerated$)$ measure
$\mu$.
\end{cor}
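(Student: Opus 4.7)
The plan is to reduce the statement directly to the previous proposition by renorming $X$ so that it becomes an $\ell_1$-direct sum of a closed hyperplane and a one-dimensional line.

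First I would dispose of the trivial one-dimensional case: if $\dim X = 1$, any nondegenerate Gaussian on $X$ makes $\mathcal{B}$ (intervals) uniformly $\mu$-continuous, and no renorming is needed. So assume $\dim X \ge 2$.

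Next I would select the decomposition. Pick any nonzero functional $x^*\in X^*$ and a vector $e\in X$ with $x^*(e)=1$. Set $Y=\ker x^*$ and $L=\operatorname{lin}(e)$. Because $Y$ is closed and $L$ is one-dimensional, one has the topological direct sum $X=Y\oplus L$: every $x\in X$ has a unique representation $x=y+l$ with $y\in Y$, $l\in L$, namely $l=x^*(x)e$ and $y=x-l$; the projection $P\colon x\mapsto y$ is continuous by the continuity of $x^*$. Now define the new norm
\[
|||x|||:=\|y\|+\|l\|,\qquad x=y+l,\;y\in Y,\;l\in L.
\]
I would check equivalence with the original norm by the triangle inequality $|||x|||\ge\|x\|$ and the estimate $|||x|||\le\|Px\|+\|x-Px\|\le(1+2\|P\|)\|x\|$, so the unit ball of $|||\cdot|||$ is comparable with that of $\|\cdot\|$. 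Moreover $|||\cdot|||$ restricted to $Y$ agrees with $\|\cdot\|$, restricted to $L$ agrees with $\|\cdot\|$, and $|||y+l|||=\|y\|+\|l\|$ is exactly the $\ell_1$-sum condition \eqref{l-sum} of the preceding proposition.

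Finally I would let $\mu$ be any one-dimensional Gaussian measure concentrated on $L$ and apply the preceding proposition directly: in the norm $|||\cdot|||$ the space $X$ is presented as $Y\oplus L$ in $\ell_1$-sum, so $\mathcal{B}$ (balls in the new norm) is uniformly $\mu$-continuous. There is really no hard step; the only thing that must be done with care is verifying that $|||\cdot|||$ is an equivalent norm, which amounts to continuity of the projection onto $Y$ along $L$, a standard consequence of the closedness of $Y$ and one-dimensionality of $L$.
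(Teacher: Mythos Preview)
Your proof is correct and follows essentially the same approach as the paper: choose a closed hyperplane $Y$ and a one-dimensional complement $L$, introduce the $\ell_1$-sum norm (\ref{l-sum}), and invoke the preceding proposition. You supply more detail than the paper (the explicit verification of norm equivalence and the separate treatment of $\dim X=1$), but the strategy is identical.
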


\begin{proof} Let  $Y\subset X$ be a one-codimensional closed
subspace, $L\subset X$ be a one-dimensional subspace and $L\cap
Y=0$. The desired norm can be introduced by (\ref{l-sum}). \end{proof}

We are unaware of publications on $\mathcal{B}$-ideal
finite-dimensional spaces. In view of Corollary
\ref{smoothConvex}, a $\mathcal{B}$-ideal space cannot be smooth
or rotund. On the other hand, by Aniszczyk's theorem, the
$n$-dimensional space $\ell_{\infty}^n$ is $\mathcal{B}$-ideal.
One can advance, as a working hypothesis, that a
finite-dimensional normed space is $\mathcal{B}$-ideal if and only
if it is polyhedral.
\medskip


\textbf{\emph{3. Class $\mathcal{B}_1$}}. We start with negative
results and single out a class of norms for which $\mathcal{B}_1$
is uniformly discontinuous.

\begin{definition}\label{DedFinUniv}
We say that a Banach space $X$ has a {\it finite universal sphere}
if there exists $r>0$ such that $\forall\,\delta>0$ and every
finite-dimensional subspace $E\subset X$ there is $z\in X$ so that
the ball $K_r^E=B_r(0)\cap E$ of $E$ belongs to
$\partial_{\delta}(B_1(z))$.
\end{definition}

\begin{remark} \label{RemarkUniversal}
Obviously, when we check that the condition of this definition
is satisfied for $X$, it is sufficient to consider $E$ from an
arbitrary increasing sequence $(E_n)$ of finite-dimensional
subspaces whose union is dense in $X$.
\end{remark}

\begin{remark}
Definition \ref{DedFinUniv} can be considered as an ``approximative
and uniform'' version of the following well known concept: A Banach
space $X$ \emph{contains no finite-dimensional Haar $($or \v{C}eby\v{s}ev$)$
subspaces} if for every finite-dimensional subspace $E\subset X$
and for each $x\in X\setminus E$ there are at least two best
approximations in $E$. For example, $L_1$ contains no finite-dimensional
Haar subspaces \cite[Theorem~2.5]{Phelps}.
\end{remark}

\begin{prop}\label{Universal}
The class $\mathcal{B}_1$ is uniformly discontinuous in any
Banach space $X$ with a finite universal sphere.
\end{prop}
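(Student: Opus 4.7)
The plan is to show that for any probability measure $\mu$ on $X$ there is a Tops{\o}e $\mathcal{B}_1$-set of positive $\mu$-measure, after which Lemma~\ref{TopsoeLemma} forbids uniform $\mu$-continuity. I first localize $\mu$: using separability, pick a countable dense subset $\{x_n\}\subset X$, so that $X=\bigcup_n B_r(x_n)$ and therefore some $w=x_{n_0}$ satisfies $\mu(B_r(w))>0$. Since $\mathcal{B}_1$ is shift-invariant, Lemma~\ref{shift} lets me replace $\mu$ by $\nu:=\mu_w$; the shifted measure satisfies $\nu(B_r(0))=\mu(B_r(w))>0$, and it suffices to prove $\mathcal{B}_1$ is not uniformly $\nu$-continuous.

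Next I assemble a subspace approximation. Again by separability, pick a nested sequence $E_1\subset E_2\subset\cdots$ of finite-dimensional subspaces with $\overline{\bigcup_k E_k}=X$. The key pointwise observation will be that $f_k(x):=\mathrm{dist}(x,K_r^{E_k})\to 0$ for every $x\in B_r(0)$: choose $e_k\in E_k$ with $\|x-e_k\|\to 0$, and when $\|e_k\|>r$ replace $e_k$ by its radial projection $e_k':=re_k/\|e_k\|\in K_r^{E_k}$; since $\|x\|\le r$, the reverse triangle inequality gives $\|e_k\|-r\le\|e_k-x\|$, so $\|x-e_k'\|\le 2\|x-e_k\|\to 0$. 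Because $\nu\restriction B_r(0)$ is a finite measure and the functions $f_k$ are continuous, Egoroff's theorem will produce a measurable set $A\subset B_r(0)$ with $\nu(A)>\nu(B_r(0))/2>0$ on which $f_k\to 0$ uniformly, say $\sup_{x\in A}f_k(x)<\eta_k$ for some $\eta_k\downarrow 0$.

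Finally I invoke the finite universal sphere: for each $k$ and $\delta_k':=1/k$, the hypothesis supplies $z_k\in X$ with $K_r^{E_k}\subset\partial_{\delta_k'}B_1(z_k)$. For each $x\in A$, pick $e_k^{*}\in K_r^{E_k}$ with $\|x-e_k^{*}\|<\eta_k$; then $|\|x-z_k\|-1|\le\|x-e_k^{*}\|+|\|e_k^{*}-z_k\|-1|<\eta_k+\delta_k'$, whence $x\in\partial_{\eta_k+\delta_k'}B_1(z_k)$. Passing to a subsequence along which $\eta_k+\delta_k'$ decreases strictly to $0$, I obtain a Tops{\o}e $\mathcal{B}_1$-set $\bigcap_k\partial_{\eta_k+\delta_k'}B_1(z_k)\supset A$ of $\nu$-measure at least $\nu(A)>0$; Lemma~\ref{TopsoeLemma} combined with Lemma~\ref{shift} will then complete the proof.

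The hardest step, I expect, is producing a \emph{single} positive-measure set that sits inside every member of a Tops{\o}e sequence of unit-ball shells. A naive covering argument scale by scale would only give a shell-measure lower bound of the form $1/(2N_\eta)$, where $N_\eta$ is the covering number of a Lemma~\ref{lemmaL} approximating subspace $E(\eta)$, a number that typically blows up with $\dim E(\eta)$ as $\eta\to 0$, so the small-$\delta$ regime is left unaddressed. The trick is to fix, once and for all, one bounded region $B_r(0)$ with $\nu$-mass $>0$ (obtained after the initial shift) and then apply Egoroff to the nested subspace approximation, converting a sequence of scale-dependent local approximations into one set $A$ of positive measure that lies in $\partial_\delta B_1(z)$ for $\delta$ arbitrarily small.
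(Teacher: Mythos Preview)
Your proof is correct, but the paper takes a shorter and more elementary route that avoids both Egoroff's theorem and the Tops{\o}e-set machinery. Rather than building a single positive-measure set $A$ lying in every shell simultaneously, the paper verifies the negation of uniform $\mu$-continuity directly: after the same shift, fix $\varepsilon>0$ with $\mu(B_r(0))>\varepsilon$; then for each $\delta>0$ Lemma~\ref{lemmaL} supplies one finite-dimensional $E$ (depending on $\delta$) with $\mu(E_\delta)>1-\varepsilon/2$, whence $\mu\big((K_r^E)_\delta\big)\ge\varepsilon/2$, and since $K_r^E\subset\partial_\delta B_1(z)$ one has $(K_r^E)_\delta\subset\partial_{2\delta}B_1(z)$, so $\mu(\partial_{2\delta}B_1(z))>\varepsilon/2$. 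Because $E$ is allowed to vary with $\delta$, no uniformization step is needed. Your Egoroff argument buys something slightly stronger --- an explicit Tops{\o}e $\mathcal{B}_1$-set of positive measure, which feeds straight into Lemma~\ref{TopsoeLemma} --- while the paper's argument is leaner and stops at the scale-by-scale lower bound, which is exactly what the definition demands. In particular, your closing concern that a scale-dependent approach must fail is not borne out: the paper's scale-by-scale argument works because Lemma~\ref{lemmaL} gives a \emph{measure} lower bound on $(K_r^E)_\delta$ directly, not a covering-number bound.
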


\begin{proof}
Let $r$ be the constant from Definition \ref{DedFinUniv} and
$\mu$ be a measure on $X$. By Lemma \ref{shift}, one may
assume $\mu\{B_r(0)\}>\varepsilon$ for some $\varepsilon>0$.
According to Lemma \ref{lemmaL}, for every $\delta>0$ there
is a finite-dimensional subspace
$E\subset X$ so that $\mu(E_{\delta})>1-\frac{\varepsilon}{2}$.
Then
$$\mu\{B_r(0)\setminus E_{\delta}\}\le \mu\{X\setminus E_{\delta}\}\le
1-(1-{\textstyle\frac{\varepsilon}{2}})={\textstyle\frac{\varepsilon}{2}},$$
so, since $K_r^E=B_r(0)\cap E$,
$$\mu\{(K_r^E)_{\delta}\}\ge\mu\{B_r(0)\}-\mu\{B_r(0)\setminus E_{\delta}\}\ge
\varepsilon-{\textstyle\frac{\varepsilon}{2}}={\textstyle\frac{\varepsilon}{2}}.$$
By definition, for some $z$
$$K_r^E\subset \partial_{\delta}B_1(z),$$ whence
$$(K_r^E)_{\delta}\subset \partial_{2\delta}B_1(z).$$
Hence $\mu\{\partial_{2\delta}(B_1(z)\}>\frac{\varepsilon}{2}\,,$
i.e. $\mathcal{B}_1$ is not uniformly $\mu$-continuous.
\end{proof}

\begin{prop}\label{renorming}
Every infinite-dimensional Banach space $X$ can be renormed
so that the new sphere will be finite universal.
\end{prop}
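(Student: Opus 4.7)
The plan is to construct an equivalent norm in which $c_0$-type behaviour is built in along a Markushevich basis, so that each coordinate ball sits exactly on a translate of the new unit sphere. I would take a bounded M-basis $(e_n,e_n^*)$ of the separable space $X$ (Ovsepian--Pelczynski), normalized so that $\|e_n\|=1$ and $C:=\sup_n\|e_n^*\|<\infty$, and put
$$
|||x||| := \max\bigl(\|x\|,\; 2\sup_n|e_n^*(x)|\bigr).
$$
The bound $\|x\|\le |||x|||\le 2C\|x\|$ makes this an equivalent norm on $X$, and the claim will be that $X$ endowed with $|||\cdot|||$ has finite universal sphere with $r=1/2$.

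By Remark \ref{RemarkUniversal} it is enough to verify the finite universal sphere condition on the increasing subspaces $E_N:=\mathrm{span}(e_1,\dots,e_N)$, whose union is dense in $X$ by the M-basis property. Given $N$ and $\delta>0$, I would take any $m>N$ and set $z:=e_m/2$. Then $|||z|||=\max(1/2,\, 2\cdot 1/2)=1$, placing $z$ on the new unit sphere. For $x\in E_N$ with $|||x|||\le 1/2$ one has $\|x\|\le 1/2$ and $\sup_n|e_n^*(x)|\le 1/4$; by biorthogonality $e_n^*(x-e_m/2)=e_n^*(x)$ for $n\neq m$ and $e_m^*(x-e_m/2)=-1/2$, so $2\sup_n|e_n^*(x-e_m/2)|=1$. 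The triangle estimate $\|x-e_m/2\|\le\|x\|+1/2\le 1$ then forces
$$
|||x-e_m/2|||=\max\bigl(\|x-e_m/2\|,\,1\bigr)=1
$$
exactly, whence $K_{1/2}^{E_N}\subset\partial B_1(e_m/2)\subset\partial_\delta B_1(e_m/2)$ for every $\delta>0$.

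The essential design choice is the factor $2$ in front of the $c_0$-type term: it is rigged so that $|||e_m/2|||=1$ is realized by that term alone, while simultaneously the triangle bound keeps $\|x-e_m/2\|$ below $1$ on $K_{1/2}^{E_N}$, so the max evaluates to $1$ independently of $x$. There is no analytic approximation to control, since the identity $|||x-e_m/2|||=1$ is exact; the only external ingredient, which I would flag as the mild technical obstacle, is the availability of a bounded M-basis with dense linear span in a general separable Banach space.
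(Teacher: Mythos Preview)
Your proof is correct and follows essentially the same route as the paper: the paper defines the identical norm $|\!|\!|x|\!|\!|=\max\{\|x\|,2\sup_n|x_n^*(x)|\}$ and shows that $K_{1/2}^{E_n}$ lies exactly on the new sphere of radius $1$ centered at $\tfrac12 x_n$. The only cosmetic difference is the source of the biorthogonal system --- the paper builds an Auerbach-type system $(x_n,x_n^*)$ with $\|x_n\|=\|x_n^*\|=1$ and $x_n^*|_{E_n}=0$ for a pre-chosen dense chain $(E_n)$, whereas you invoke a bounded M-basis \`a la Ovsepian--Pe{\l}czy\'nski and set $E_N=\mathrm{span}(e_1,\dots,e_N)$; either device yields the needed vanishing $e_m^*|_{E_N}=0$ for $m>N$. (Your remark that $|\!|\!|z|\!|\!|=1$ is true but not used --- what matters is $|\!|\!|x-z|\!|\!|=1$, which you verify correctly.)
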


\begin{proof} Let $(E_n)$ be an increasing sequence of
finite-dimensional subspaces whose union is dense in $X$. In
just the same way as in \cite[p.~7]{HajekMVZ}, one can show
the existence in $X$ of infinite Auerbach system, i.e. a
biorthogonal sequence $(x_n,x_n^*)\,$, $\,\|x_n\|=\|x_n^*\|=1$,
with an additional condition: for all $n$
$$x_n^*(x)=0\;\;\;\mbox{as soon as}\;\;\;x\in E_n.$$

The new norm on $X$ can be introduced by the formula
$$|\!|\!|x|\!|\!|=\max\left\{\|x\|,\,2\sup\nolimits_n|x_n^*(x)|\right\}.$$

We check that the new sphere is finite universal. Fixing $n$,
consider the set
$$K_n=\left\{x\in B_1(0):\; x_n^*(x)=
{\textstyle\frac{1}{2}},\;{\textstyle\|x-\frac{1}{2}}x_n\|<{\textstyle\frac{1}{2}}\right\}.$$

If $m\ne n$ and $x\in K_n$ then
$$|x_m^*(x)|=|x_m^*(x-{\textstyle\frac{1}{2}}x_n)|\le
\|x-{\textstyle\frac{1}{2}}x_n\|<{\textstyle\frac{1}{2}},$$ so
$|\!|\!|x|\!|\!|=1$. Moreover, if $e\in E_n$ and
$\|e\|<\frac{1}{2}$ then
$$x_n^*(e+{\textstyle\frac{1}{2}}x_n)={\textstyle\frac{1}{2}},\;\;
\|e+{\textstyle\frac{1}{2}}x_n\|\le 1\;\;
\mbox{and}\;\;\|(e+{\textstyle\frac{1}{2}}x_n)-{\textstyle\frac{1}{2}}x_n\|<
{\textstyle\frac{1}{2}}\,.$$

Therefore, $K^{E_n}_{1/2}$ belongs to the new sphere of radius $1$ with
center ${\textstyle\frac{1}{2}}x_n$. Applying Remark
\ref{RemarkUniversal} we get that the new sphere is finite
universal with $r=\frac{1}{2}$. \end{proof}

This construction of $|\!|\!|\;|\!|\!|$ is similar to a
construction in \cite{Orno}.

\begin{cor}
If a Banach space is $\mathcal{B}_1$-ideal with respect to any
equivalent norm then it is finite-dimensional. The
sphere of a finite-dimensional space cannot be finite universal.
\end{cor}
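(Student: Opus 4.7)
The plan is to recognize that this corollary is essentially a packaging of the two preceding propositions, and the main task is just to verify the ``existence of a $\mu$-continuous measure'' hypothesis so the $\mathcal{B}_1$-ideal condition is not vacuously satisfied. I would prove the two assertions separately, but both use the same supply of Gaussian measures provided by Proposition \ref{body}.

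For the first assertion, I would argue the contrapositive: if $X$ is infinite-dimensional, produce one equivalent norm in which $X$ fails to be $\mathcal{B}_1$-ideal. Proposition \ref{renorming} supplies an equivalent norm $|\!|\!|\cdot|\!|\!|$ whose unit sphere is finite universal, and Proposition \ref{Universal} then says that $\mathcal{B}_1$ is uniformly discontinuous in this new norm, meaning no measure makes it uniformly $\mu$-continuous. Since balls of radius $\le 1$ are convex bodies, Proposition \ref{body} provides a non-degenerated Gaussian measure $\mu$ on $(X,|\!|\!|\cdot|\!|\!|)$ for which $\mathcal{B}_1$ is $\mu$-continuous. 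This measure rules out the ``vacuous'' $\mathcal{B}_1$-ideality mentioned earlier, so $(X,|\!|\!|\cdot|\!|\!|)$ is genuinely not $\mathcal{B}_1$-ideal, as required.

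For the second assertion, I would proceed by contradiction. Assume a finite-dimensional normed space $X$ has a finite universal sphere. Proposition \ref{Universal} again yields the uniform discontinuity of $\mathcal{B}_1$. A non-degenerated Gaussian measure on $X$ exists (e.g.\ any standard Gaussian on a coordinatization $X\cong\mathbb{R}^n$), and Proposition \ref{body} makes $\mathcal{B}_1$ $\mu$-continuous for this $\mu$. Hence $X$ would not be $\mathcal{B}_1$-ideal, which contradicts the fact recalled in Section 1 that every finite-dimensional normed space is $\mathcal{B}_1$-ideal.

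I do not anticipate a serious obstacle here. The only delicate point is to confirm that a $\mu$-continuous measure actually exists in each case, so that uniform discontinuity genuinely defeats $\mathcal{B}_1$-ideality rather than coexisting with a vacuous version of it; the Gaussian measures from Proposition \ref{body} handle this uniformly in both directions.
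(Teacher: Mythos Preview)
Your proposal is correct and follows essentially the same route as the paper: the first assertion combines Propositions \ref{body}, \ref{Universal} and \ref{renorming}, and the second combines Proposition \ref{Universal} with the $\mathcal{B}_1$-ideality of finite-dimensional spaces. The only difference is that you are slightly more explicit than the paper in invoking Proposition \ref{body} for the second assertion to rule out vacuous $\mathcal{B}_1$-ideality; the paper leaves this implicit and cites only Proposition \ref{Universal} together with Tops{\o}e's result (equivalently Corollary \ref{finiteIdeal}).
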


\begin{proof} The first part of corollary is a simple combination
of Propositions \ref{body}, \ref{Universal} and \ref{renorming}.
The second part follows from Proposition \ref{Universal} and
mentioned Tops{\o}e's result \cite[p.~153]{Topsoe3} (see also
Corollary \ref{finiteIdeal} below). \end{proof}

\noindent\emph{Example.} Let $R=\cup_nS_n$ be an increasing
sequence of metric compact sets $S_n$. The space $C_0(R)$ of
continuous function $x(s)$ with $x(s)\to 0$ as $s\to\infty$ has
finite universal sphere. In particular, the spaces $c_0$ and
$C_0(\mathbb{R})$ have finite universal sphere. \smallskip

The verification is simple. Similarly, one can easily  check that
every subspace of $c_0$ has finite universal sphere and that the
space $C(S)$, with a compact metric $S$, has not finite universal
sphere. It is not hard to prove the following

\begin{prop}\label{complements}
Suppose a Banach space $Y$ has finite universal sphere and
$X=Y\oplus Z$ with
$$\|y+z\|=\max(\|y\|,\|z\|),\;\;y\in Y,\;\;z\in Z.$$
Then $X$ has finite universal sphere too.
\end{prop}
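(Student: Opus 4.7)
The plan is to reduce finite universality of $X$ directly to finite universality of $Y$ by exploiting the product structure of the norm. Let $r_Y$ be the constant from Definition \ref{DedFinUniv} witnessing that $Y$ has a finite universal sphere, and set $r:=\min(r_Y,1)$; this will be the witnessing constant for $X$.

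To use Remark \ref{RemarkUniversal}, I would choose increasing sequences $(F_n)$ in $Y$ and $(G_n)$ in $Z$ (taking $G_n=Z$ if $Z$ is finite-dimensional) whose unions are dense in $Y$ and $Z$ respectively, and set $E_n:=F_n\oplus G_n$. Then $\bigcup_n E_n$ is dense in $X$, so it suffices to verify the universality condition for each $E_n$. Fix $\delta>0$ and $n$. Apply finite universality of $Y$ to the finite-dimensional subspace $F_n\subset Y$ to obtain $z_Y\in Y$ with
$$K_{r_Y}^{F_n}\subset \partial_\delta B_1(z_Y)\quad\text{in }Y.$$
Take $z:=z_Y\in X$ as the desired center for $E_n$.

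Now pick any $x\in K_r^{E_n}$ and write $x=y+z_0$ with $y\in F_n$ and $z_0\in G_n$. Since $\|x\|=\max(\|y\|,\|z_0\|)\le r\le r_Y$, we have $y\in K_{r_Y}^{F_n}$ and $\|z_0\|\le r\le 1$. By the choice of $z_Y$,
$$1-\delta<\|y-z_Y\|_Y<1+\delta.$$
Because $y-z_Y\in Y$ and $z_0\in Z$, the product norm gives
$$\|x-z\|_X=\max\bigl(\|y-z_Y\|_Y,\|z_0\|_Z\bigr).$$
The lower bound is immediate: $\|x-z\|_X\ge\|y-z_Y\|_Y>1-\delta$. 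For the upper bound, $\|y-z_Y\|_Y<1+\delta$ and $\|z_0\|\le r\le 1<1+\delta$, so the max is strictly less than $1+\delta$. Hence $x\in\partial_\delta B_1(z)$ in $X$, which shows $K_r^{E_n}\subset\partial_\delta B_1(z)$.

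The only slightly subtle point, which I expect to be the chief (though still minor) obstacle, is the calibration of $r$: one needs to control the $Z$-component $z_0$ so that the max-norm distance from $z_Y$ lies in the required annulus, which forces the restriction $r\le 1$. Everything else is a clean consequence of the product form of the norm and of Remark \ref{RemarkUniversal}.
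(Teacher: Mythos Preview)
Your argument is correct and is exactly the natural route; the paper itself supplies no proof beyond the remark that it ``is not hard,'' and your write-up carries out precisely the straightforward verification one would expect, including the only real point of care --- capping $r$ at $1$ so that the $Z$-component does not push $\|x-z\|$ above $1+\delta$.
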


Now we turn to positive results. First we consider a compact set
of centers. The following Corollary \ref{compact} is more or less
known (cf. with \cite[Theorem 6]{BillingsleyTopsoe}). We present
its direct and simple proof.

\begin{lem}\label{compactLemma}
Let $X$ be a Banach space. Every Tops{\o}e $\mathcal{B}$-set
$A=\bigcap_n\partial_{\delta_n}B_{r_n}(z_n)$, with a convergent
sequence of centers $(z_n)$, belongs to some
sphere of $X$.
\end{lem}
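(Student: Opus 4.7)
The plan is to argue that if $A$ is nonempty (the empty case is trivial, since then $A$ is contained in any sphere), then the radii $r_n$ must converge, and every point of $A$ sits at the same distance from the limit center. The only real ingredient is continuity of the norm together with the fact that $\delta_n \downarrow 0$.

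Concretely, I would first unpack the definition: $x \in A$ means that for every $n$,
\[
r_n - \delta_n < \|x - z_n\| < r_n + \delta_n,
\]
equivalently $\bigl|\|x - z_n\| - r_n\bigr| < \delta_n$. Next, I would fix any point $x_0 \in A$ (assuming $A \neq \emptyset$). Since $z_n \to z$, we have $\|x_0 - z_n\| \to \|x_0 - z\|$, and since $\delta_n \to 0$, the inequality $|\|x_0 - z_n\| - r_n| < \delta_n$ forces $r_n \to r$, where $r := \|x_0 - z\|$.

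With $r$ identified, I would then take an arbitrary $x \in A$ and pass to the limit in the same inequality. Because $\|x - z_n\| \to \|x - z\|$, $r_n \to r$, and $\delta_n \to 0$, we obtain $\|x - z\| = r$, so $x \in \partial B_r(z)$. This yields the desired inclusion $A \subset \partial B_r(z)$. If $A = \emptyset$, one picks any $r > 0$ and the conclusion holds vacuously.

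There is no serious obstacle here; the argument is essentially a two-line limit computation using the continuity of the norm. The only point that deserves mention is that one cannot a priori assume that $(r_n)$ converges — its convergence is a \emph{consequence} of the existence of at least one point of $A$, and must be extracted before one can pin down a common radius for the rest of the points.
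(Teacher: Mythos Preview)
Your argument is correct and in fact more direct than the paper's. The paper does not use a witness point to force convergence of $(r_n)$; instead it observes that $(r_n)$ is bounded (else $A=\emptyset$), extracts by Bolzano--Weierstrass a subsequence with $r_{n_k}\to r$, then passes to a further, rapidly converging subsequence and \emph{enlarges} the $\delta_{n_k}$ to $2^{-k}$ so that the resulting rings $\partial_{\delta_{n_k}}B_{r_{n_k}}(z_{n_k})$ are nested and each contains the sphere $S$ of center $z$ and radius $r$; their intersection then equals $S$, and $A$ sits inside it. Your approach trades this subsequence-and-nesting machinery for the single observation that any point $x_0\in A$ pins down $\lim r_n=\|x_0-z\|$ along the \emph{whole} sequence, after which the inclusion $A\subset\partial B_r(z)$ is immediate. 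What the paper's route buys is a slightly stronger byproduct (the intersection of a suitable subsequence of enlarged rings is \emph{exactly} a sphere, not merely contained in one), but for the stated lemma your two-line limit computation suffices and is cleaner.
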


\begin{proof} We show that for some sequence $(n_k)$
the intersection
$A=\bigcap_k\partial_{\delta_{n_k}}B_{r_{n_k}}(z_{n_k})$
is a sphere. Let $z_n\to z$; hence $(z_n)$ is bounded and
the sequence $(r_n)$ is bounded too (otherwise $A$ would be empty).
Take a sequence $(n_k)$ so that $r_{n_k}\to r$ as
$k\to\infty$. Passing to a subsequence, one may assume
$(z_{n_k})$, $(r_{n_k})$ and $(\delta_{n_k})$ to be convergent
very quickly. More precisely, one may assume that for all $k$
$$\|z_{n_k}-z\|<2^{-k-4}\;,\;\;\;|r_{n_k}-r|<2^{-k-4}\;
\;\;\;\mbox{and}\;\;\;\delta_{n_k}<2^{-k}\,.$$ Now, we slightly
increase $\delta_{n_k}$ (namely, take $\delta_{n_k}=2^{-k}$).
By the triangle inequality, the obtained sequence of
``rings'' $\partial_{\delta_{n_k}}B_{r_{n_k}}(z_{n_k})$ is decreasing and
every such ring contains the sphere $S$ with center $z$ and radius
$r$. Hence
$$\bigcap\nolimits_k\partial_{\delta_{n_k}}B_{r_{n_k}}(z_{n_k})=S;$$
so $A\subset S$. \end{proof}

\begin{cor}\label{compact}
Let $X$ be a Banach space. Let $\mathcal{U}$ be a class
consisting of balls $B_r(z)\,,\;z\in Z\,,\;r\in\mathbb{R}$,
with a compact set of centers $Z$. Then $X$ is
$\mathcal{U}$-ideal.
\end{cor}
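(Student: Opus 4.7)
The plan is to deduce Corollary \ref{compact} by combining the two immediately preceding lemmas: use Lemma \ref{TopsoeLemma} to reduce uniform $\mu$-continuity to measuring Tops{\o}e $\mathcal{U}$-sets, then use compactness of $Z$ to force convergent centers and apply Lemma \ref{compactLemma}.

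More precisely, assume $\mathcal{U}$ is $\mu$-continuous; I want to show it is uniformly $\mu$-continuous. By Lemma \ref{TopsoeLemma}, it suffices to prove $\mu(A)=0$ for every Tops{\o}e $\mathcal{U}$-set
$$A=\bigcap_n\partial_{\delta_n}B_{r_n}(z_n),\qquad z_n\in Z,\ \delta_n\downarrow 0.$$
If the sequence of radii $(r_n)$ is unbounded, the intersection is empty and there is nothing to prove, so I may assume $(r_n)$ is bounded. Compactness of $Z$ lets me extract a subsequence $(n_k)$ with $z_{n_k}\to z\in Z$, and a further extraction gives $r_{n_k}\to r\ge 0$.

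The set $A':=\bigcap_k\partial_{\delta_{n_k}}B_{r_{n_k}}(z_{n_k})$ contains $A$, is itself a Tops{\o}e $\mathcal{B}$-set, and has the convergent sequence of centers required by Lemma \ref{compactLemma}. That lemma then places $A'$ inside a single sphere of $X$; inspection of its proof identifies the sphere as $S=\partial B_r(z)$. Since $z\in Z$, the ball $B_r(z)$ belongs to $\mathcal{U}$, so the assumed $\mu$-continuity of $\mathcal{U}$ gives $\mu(S)=0$, whence $\mu(A)\le\mu(A')\le\mu(S)=0$.

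The only delicate point is making sure the limiting sphere $S$ really corresponds to a member of $\mathcal{U}$: this is exactly where compactness (not mere boundedness) of $Z$ is used, since it guarantees $z\in Z$ rather than just $z\in\overline{Z}$. The radius $r\ge 0$ is automatic, and the empty-intersection case takes care of unbounded radii. Everything else is just citing the two lemmas in the right order.
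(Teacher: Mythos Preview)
Your proof is correct and follows the same approach as the paper's: reduce to Tops{\o}e sets via Lemma~\ref{TopsoeLemma}, use compactness of $Z$ to get convergent centers, and apply Lemma~\ref{compactLemma} to land in a sphere of a ball in $\mathcal{U}$. Your version is simply more explicit---you spell out the subsequence extraction and correctly flag that compactness (not mere boundedness) of $Z$ is what ensures the limiting center $z$ stays in $Z$, so that $B_r(z)\in\mathcal{U}$.
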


\begin{proof} Let the class $\mathcal{U}$ be $\mu$-continuous.
Since $Z$ is compact, by Lemma \ref{compactLemma}, each Tops{\o}e
$\mathcal{U}$-set belongs to the boundary $\partial U$ of some set
$U\in \mathcal{U}$. So, by Lemma \ref{TopsoeLemma}, $\mathcal{U}$
is uniformly $\mu$-continuous, hence $X$ is $\mathcal{U}$-ideal.
\end{proof}

\begin{cor}\label{finiteIdeal}
Every finite-dimensional normed space is $\mathcal{B}_1$-ideal.
\end{cor}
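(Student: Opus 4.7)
The plan is to apply Lemma \ref{TopsoeLemma}: assuming $\mathcal{B}_1$ is $\mu$-continuous, it suffices to prove $\mu(A)=0$ for every Tops{\o}e $\mathcal{B}_1$-set $A=\bigcap_n\partial_{\delta_n}B_{r_n}(z_n)$, with $\delta_n\downarrow 0$ and $r_n\le 1$. The whole point is that finite-dimensionality, combined with the previous Lemma \ref{compactLemma}, will do the job by making the sequence of centers precompact, reducing us to the compact-centers case already handled.

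First I would dispose of the case $A=\emptyset$ and otherwise pick a point $x_0\in A$. From $x_0\in\partial_{\delta_n}B_{r_n}(z_n)$ one reads off $\|z_n\|\le\|x_0\|+r_n+\delta_n\le\|x_0\|+1+\delta_1$, so $(z_n)$ is bounded in $X$. This is the only spot where finite-dimensionality of $X$ enters: by Bolzano-Weierstrass I pass to a subsequence with $z_{n_k}\to z$, and may further assume $r_{n_k}\to r\in[0,1]$. The sub-intersection $A':=\bigcap_k\partial_{\delta_{n_k}}B_{r_{n_k}}(z_{n_k})$ contains $A$, is itself a Tops{\o}e $\mathcal{B}_1$-set, and now has a convergent sequence of centers, so Lemma \ref{compactLemma} places $A'$ inside the sphere $\partial B_r(z)$.

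For $r>0$, $B_r(z)\in\mathcal{B}_1$ and the assumed $\mu$-continuity gives $\mu(\partial B_r(z))=0$ at once, so $\mu(A)\le\mu(A')=0$. The only real wrinkle --- the ``main obstacle,'' such as it is --- is the degenerate case $r=0$, where the conclusion of Lemma \ref{compactLemma} only yields $A'\subseteq\{z\}$. I would handle this by noting that $\mu$-continuity of $\mathcal{B}_1$ already forces $\mu$ to be atomless: for any $z\in X$ and any $e\in S_X$, the point $z$ lies on the sphere $\partial B_1(z+e)$ of a ball in $\mathcal{B}_1$ (since $\|z-(z+e)\|=1$), so $\mu(\{z\})\le\mu(\partial B_1(z+e))=0$. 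In either case $\mu(A)=0$, and Lemma \ref{TopsoeLemma} then yields uniform $\mu$-continuity of $\mathcal{B}_1$, i.e., $\mathcal{B}_1$-ideality of $X$.
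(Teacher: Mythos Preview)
Your proof is correct and follows essentially the same route the paper intends: the corollary is placed immediately after Corollary~\ref{compact} and is meant to be its direct consequence, via the observation that a nonempty Tops{\o}e $\mathcal{B}_1$-set forces the centers to be bounded, hence precompact in finite dimensions, so Lemma~\ref{compactLemma} applies. Your treatment is in fact more careful than the paper's, which glosses over the degenerate case $r=0$ that you handle explicitly.
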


Besides, we already know this corollary.

\begin{definition}
We say that a dual Banach space $X=Y^*$ {\it has property}
$(\mathbf{m}^*)$ if for every weakly* null sequence $x_n\in X$
such that $\|x_n\|\to c$ as $n\to\infty$ there exists a strictly
increasing function $\varphi(t)\ge t\,$, $\;t\ge 0$ (depending on
$c$), so that for all $x\in X$
$$\lim\nolimits_n\|x+x_n\|= \varphi(\|x\|).$$
\end{definition}

This definition is inspired, on the one hand, by the proof of
$\mathcal{B}_1$-ideality of $\ell_p$ from \cite{Topsoe3}, and on
the other hand, by the following well-known concept. A Banach
space $Y$ has {\it property} $(\mathbf{M}^*)$ of Kalton
\cite{Kalton} if for all elements $x,y\in X=Y^*$ with
$\|x\|=\|y\|$ and every weakly* null sequence $(x_n)$
$$\limsup\nolimits_n\|x+x_n\|= \limsup\nolimits_n\|y+x_n\|.$$ We
suspect that there is a connection between these properties.

\begin{remark}
Every space $\ell_p$, $1\le p<\infty$, has property
$(\mathbf{m}^*)$. There are spaces, different
from $\ell_p$, which have property $(\mathbf{m}^*)$. Such
property have, for example, the James spaces $J_p$, $1<p<\infty$.
The property $(\mathbf{m}^*)$ is hereditary: every weakly*
closed infinite-dimensional subspace of a space with
property $(\mathbf{m}^*)$ has this property.
\end{remark}

The following lemma and Corollary \ref{weak-p} generalize
\cite[Theorem~2]{Topsoe3}.

\begin{lem}\label{lweak-p}
Suppose a dual Banach space $X=Y^*$ has property $(\mathbf{m}^*)$.
Then every Tops{\o}e $\mathcal{B}_1$-set
$A=\bigcap_n\partial_{\delta_n}B_{r_n}(z_n)$ of $X$ belongs to a
sphere of $X$ with radius $\le 1$.
\end{lem}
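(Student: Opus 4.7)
The plan is to extract a subsequence along which centers converge weak*, radii converge, and norms of displacements converge, and then use property $(\mathbf{m}^*)$ to pin down the distance from every $x\in A$ to the common limiting center.

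We may assume $A\neq\emptyset$. For any $x\in A$, the defining inequalities give $\|x-z_n\|\le r_n+\delta_n\le 1+\delta_n$, so $(z_n)$ is norm-bounded. Since $X=Y^*$ is separable, so is $Y$ (separability of a dual passes to the space), hence the closed unit ball of $X$ is weak*-metrizable and norm-bounded sequences in $X$ have weak*-convergent subsequences. After passing to a subsequence (still indexed $n$) we may assume $z_n\to z$ in the weak* topology, $r_n\to r$ with $r\le 1$, and, after one more extraction exploiting that $\|z_n-z\|$ is bounded, $\|z_n-z\|\to c$ for some $c\ge 0$.

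Next, for each $x\in A$ the inequality $|\,\|x-z_n\|-r_n|<\delta_n\downarrow 0$ forces $\|x-z_n\|\to r$. Writing $x-z_n=(x-z)-(z_n-z)$, where $(z_n-z)$ is weak*-null with $\|z_n-z\|\to c$, property $(\mathbf{m}^*)$ yields $\lim_n\|x-z_n\|=\varphi_c(\|x-z\|)$. Hence $\varphi_c(\|x-z\|)=r$; strict monotonicity of $\varphi_c$ forces $\|x-z\|=\rho:=\varphi_c^{-1}(r)$, a value independent of $x\in A$. The inequality $\varphi_c(t)\ge t$ gives $\rho\le\varphi_c(\rho)=r\le 1$. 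Thus $A\subset\partial B_\rho(z)$, a sphere of radius at most $1$.

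The main obstacle I anticipate is justifying weak*-sequential compactness of bounded sets in $X$, which rests on the (not entirely immediate) fact that separability of the dual $X=Y^*$ implies separability of $Y$, so that the unit ball of $X$ is weak*-metrizable. A secondary subtlety worth recording is that the spherical containment is obtained only along the chosen subsequence, but this is harmless: the full Topsøe set $A$ is contained in the intersection over any subsequence of indices, so the containment $A\subset\partial B_\rho(z)$ transfers back with the same $z$ and $\rho$.
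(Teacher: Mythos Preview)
Your proof is correct and follows essentially the same route as the paper's: extract a subsequence along which the centers converge weak*, the radii converge, and the norms of the displacements converge, then invoke property $(\mathbf{m}^*)$ to conclude that every point of $A$ lies at the fixed distance $\varphi^{-1}(r)\le r\le 1$ from the limiting center. Your extra care in justifying weak* sequential compactness via metrizability (using that separability of $X=Y^*$ forces separability of $Y$) and your remark that passing to a subsequence only enlarges the intersection are both sound and make explicit what the paper leaves implicit under the phrase ``by weak* compactness''.
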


\begin{proof} Without loss of generality one may assume the
set $(z_n)$ to be bounded, otherwise the intersection $A$ would be
empty. Take a sequence $(n_k)$ so that $r_{n_k}\to r\le 1$.
By weak* compactness, passing to a subsequence, one may assume
$z_{n_k}=z+x_{n_k}$ with weakly* null $(x_{n_k})$ and convergent
$(\|x_{n_k}\|)$. Then for all
$x\in\bigcap_k\partial_{\delta_{n_k}}B_{r_{n_k}}(z_{n_k})$
$$\|x-z-x_{n_k}\|=\|x-z_{n_k}\|\to r\;\;\;\mbox{as}\;\;\;k\to\infty.$$

Since $X$ has property $(\mathbf{m}^*)$,
$$\|x-z-x_{n_k}\|\to \varphi(\|x-z\|);\;\;\mbox{as}\;\;\;k\to\infty.$$

Hence, for all $x\in\bigcap_k\partial_{\delta_{n_k}}B_{r_{n_k}}(z_{n_k})$
$$ \varphi(\|x-z\|)=r,\;\;\;\;\mbox{whence}\;\;\;\; \|x-z\|=\varphi^{-1}(r).$$

So, $A$ belongs to the sphere with the center $z$ and radius
$\varphi^{-1}(r)\le r\le 1$. \end{proof}

\noindent\emph{Proof of Corollary} \ref{weak-p}. The contrary
means the existence of a measure $\mu$ respect to which
$\mathcal{B}_1$ is continuous, scalars $\varepsilon$,
$\delta_n\to 0$ and a sequence of balls $B_{r_n}(z_n)$, for
which $\mu\{\bigcap_n\partial_{\delta_n}B_{r_n}(z_n)\}>\varepsilon$.
This contradicts Lemma \ref{lweak-p}. \qed

\begin{remark}
The space $L_p$, $1<p<\infty$, $p\ne 2$, has not property
$(\mathbf{m}^*)$. Moreover Elena Riss has noted that for the
Rademacher functions $(\mathbf{r}_n)$ and $\delta_n\downarrow 0$
the Tops{\o}e set $\bigcap_n \partial_{\delta_n}B_1(\mathbf{r}_n)$
coincides with the set
$$E=\{x\in L_p:\textstyle{\frac{1}{2}}\|x-1\|^p+\textstyle{\frac{1}{2}}\|x+1\|^p=1\}.$$
The set $E$, for $p\ne 2$, belongs to no sphere of $L_p$.
\end{remark}

\begin{cor}\label{ell-p} {\rm (of Corollary \ref{weak-p}.)}
Every weakly* closed subspace of $\ell_p$, $1\le p<\infty$,
is $\mathcal{B}_1$-ideal.
\end{cor}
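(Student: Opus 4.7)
My plan is to reduce the statement directly to Corollary \ref{weak-p} by combining three ingredients that the paper has already put in place: (i) $\ell_p$ itself has property $(\mathbf{m}^*)$; (ii) the property passes to weakly* closed infinite-dimensional subspaces; (iii) finite-dimensional subspaces are handled separately by Corollary \ref{finiteIdeal}. Both (i) and (ii) are explicitly asserted in the remark following the definition of $(\mathbf{m}^*)$, so I am allowed to take them for granted here.

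First I would recall that $\ell_p$ is a dual space for every $1\le p<\infty$: for $p=1$, it is the dual of $c_0$, and for $1<p<\infty$ it is the dual of $\ell_{p'}$ with $1/p+1/p'=1$ (in the reflexive range weak* and weak coincide, so ``weakly* closed'' simply means ``closed''). Then I would observe that a weakly* closed subspace $Z\subset\ell_p$ is itself a dual space — namely $Z=(Y/Z_\perp)^*$, where $Y$ is the predual of $\ell_p$ and $Z_\perp\subset Y$ is the pre-annihilator of $Z$ — so it makes sense to ask whether $Z$ has property $(\mathbf{m}^*)$.

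Now I split into two cases. If $Z$ is finite-dimensional, Corollary \ref{finiteIdeal} already tells us that $Z$ is $\mathcal{B}_1$-ideal, and there is nothing more to prove. If $Z$ is infinite-dimensional, then, being a weakly* closed infinite-dimensional subspace of a space with property $(\mathbf{m}^*)$, it inherits property $(\mathbf{m}^*)$ by the hereditary statement in the remark above. Hence Corollary \ref{weak-p} applies directly and yields that $Z$ is $\mathcal{B}_1$-ideal.

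The only non-routine step is invoking the hereditary property of $(\mathbf{m}^*)$; since the paper asserts this in the preceding remark without proof, the corollary follows in one line once the finite-dimensional case is separated off. I would therefore expect the main ``obstacle'' to be purely presentational: stating clearly that $Z$ must be regarded as a dual space in its own right before one can even speak of $(\mathbf{m}^*)$ for $Z$.
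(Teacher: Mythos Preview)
Your proposal is correct and matches the paper's intended argument: the paper gives no explicit proof but labels the statement ``(of Corollary~\ref{weak-p})'', relying on the preceding remark that $\ell_p$ has property $(\mathbf{m}^*)$ and that this property is inherited by weakly* closed infinite-dimensional subspaces. Your added care---noting that a weakly* closed subspace is itself a dual space, and separating off the finite-dimensional case via Corollary~\ref{finiteIdeal}---just makes explicit what the paper leaves implicit.
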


Let us recall, Tops{\o}e \cite[p.~157]{Topsoe3} asked whether
every subspace of $\ell_p$, $1\le p<\infty$, is
$\mathcal{B}_1$-ideal? Corollary \ref{ell-p} provides the positive
answer for $1<p<\infty$ and for all weakly* closed subspaces of
$\ell_1$.

\proof[Acknowledgements] The author wishes to express his thanks
to M.~Ostrovskii, I.~Matsak and L.~Zaj\'{\i}\v{c}ek  for valuable
consultations and to the referee for very useful comments and
suggestions. \bigskip


\begin{thebibliography}{20}

\bibitem{Aniszczyk}
B.~Aniszczyk,
Uniformity in weak convergence with respect to balls in $l^{\infty}$, $c$, $C([0,1])$,
Math. Scand. 41 (1977) 290--294.

\bibitem{BenyaminiLindenstrauss}
Y.~Benyamini, J.~Lindenstrauss,
Geometric nonlinear functional analysis.
AMS Colloquium Publications, Providence RA, 2000.

\bibitem{BillingsleyTopsoe}
P.~Billingsley, F.~Tops{\o}e,
Uniformity in weak convergence,
Z. Wahrschein. verw. Geb. 7 (1967) 1--16.

\bibitem{DevilleGZ}
R.~Deville, G.~Godefroy, V.~Zizler,
Smoothness and renorming in Banach spaces.
Longman Scientific and Technical, New York, 1993.

\bibitem{FonfLP}
V.P.~Fonf, J.~Lindenstrauss, R.R.~Phelps,
Infinite dimensional convexity.
In: Handbook of the geometry of Banach spaces, vol. I, Elsevier, New York, 2001 599--669.

\bibitem{HajekMVZ}
P.~H\'ajek, V.~Montesinos, J.~Vanderverff, V.~Zizler,
Biorthogonal systems in Banach spaces,
CMS Books in Mathematics, Springer, 2008.

\bibitem{Kalton}
N.J.~Kalton,
M-ideals of compact operators,
Illinois J. Math. 37 (1993) 147--169.

\bibitem{MatsakPlichko}
I.~Matsak, A.~Plichko,
Remarks to the Glivenko-Cantelli theorem in separable metric spaces,
Mathematical Bulletin of the Shevchenko Scientific Society 7 (2010) 133--143 (in Ukrainian).

\bibitem{Orno}
P.~{\O}rno,
A separable reflexive Banach space having no finite dimensional \v{C}eby\v{s}ev subspaces,
Lecture Notes in Math. 604 (1977) 73--75.

\bibitem{Phelps}
R.R.~Phelps,
Uniqueness of Hahn-Banach extensions and unique best approximation,
Trans. Amer. Math. Soc. 95 (1960) 238--255.

\bibitem{Sazonov}
V.V.~Sazonov,
On the Glivenko-Cantelli theorem,
Theor. Probab. Appl. 8 (1963) 282--285.

\bibitem{Szabados}
T.~Szabados,
On the Glivenko-Cantelli theorem for balls in metric spaces,
Studia Sci. Math. Hungar. 24 (1989) 473--481.

\bibitem{Topsoe1}
F.~Tops{\o}e,
On the connection between $P$-continuity and $P$-uniformity in weak convergence,
Theor. Probab. Appl. 12 (1967) 279--288.

\bibitem{Topsoe2}
F.~Tops{\o}e,
On the Glivenko-Cantelli theorem,
Z. Wahrschein. verw. Geb. 14 (1970) 239--250.

\bibitem{Topsoe3}
F.~Tops{\o}e,
Uniformity in weak convergence with respect to balls in Banach spaces,
Math. Scand. 38 (1976) 148--158.

\bibitem{TopsoeDudleyHoffmannJorgensen}
F.~Tops{\o}e, R.M.~Dudley, J.~Hoffmann-J{\o}rgensen,
Two examples concerning uniform convergence of measures w.r.t. balls in Banach spaces,
Lecture Notes in Math. 566 (1976) 141--146.

\end{thebibliography}
\end{document}